\def\ZZ{{\mathbf Z}}
\def\NN{{\mathbf N}}
\def\hpsi{{\hat \psi}}
\def\md{{\rm mod}\,}
\newtheorem{theorem}{Theorem}
\newtheorem{lemma}{Lemma}
\newtheorem{corollary}{Corollary}
\newtheorem{proposition}{Proposition}
\def\beqns{\begin{eqnarray*}}
\def\eeqns{\end{eqnarray*}}
\def\beqn{\begin{eqnarray}}
\def\eeqn{\end{eqnarray}}
\def\C{{\bf C} \,}
\newcommand{\EQ}[2]{\begin{equation}{{#2}\label{#1}} \end{equation}}
\newenvironment{thm}[2]{\begin{sloppypar}\refstepcounter{theorem}%
                        {\bf #1 \thetheorem.}\label{#2}\em{}}%
                        {\end{sloppypar}}
                        \newcommand{\R}{{\rm I}\!{\rm R}}
\begin{document}

\title{Convergence of Multilevel Stationary Gaussian Quasi-Interpolation}
\author{Jeremy Levesley\footnote{Department of Mathematics, University of Leicester, LE1 7RH, UK. {\tt jl1@le.ac.uk}} and Simon Hubbert\footnote{Department of Economics, Mathematics and Statisitcs, Birkbeck, University of London, WC1H 7HX, UK. {\tt s.hubbert@bbk.ac.uk}}}

\maketitle

\begin{abstract}
In this paper we present a new multilevel quasi-interpolation algorithm for smooth periodic functions using scaled Gaussians as basis functions. Recent research in this area has focussed upon implementations using basis function with finite smoothness. In this paper
we deliver a first error estimates for the multilevel algorithm using analytic basis functions. The estimate  has two parts, one involving the convergence of a low degree polynomial truncation term  and  one involving  the control of the remainder of the truncation as the algorithm proceeds. Thus, numerically one observes a convergent scheme. Numerical results suggest that the scheme converges much faster than the theory shows.
\end{abstract}

\section{Introduction}

The radial basis function (RBF) method has become a successful tool for approximating functions from scattered data. However,  despite many promising theoretical advances one drawback is that for very large data sets the method struggles to maintain a good fit in a numerically stable manner.
To overcome this problem Floater and Iske \cite{floater} proposed a multilevel approximation method where an initial stable approximation is formed on a relatively sparse subset of the data and this is   then refined over multiple levels of residual RBF interpolation on progressively denser subsets.  The original implementation uses Wendland RBFs (finitely smooth and compactly supported) where the size of the support is scaled to reflect the relative density at a given level. In \cite{hales} a multilevel scheme using polyharmonic splines (finitely smooth and globally supported) on uniform grids was presented and constant reduction in error per level was shown. In \cite{iske}  a modified multilevel method was considered, using thin-plate splines for an initial approximation and with subsequent refinements performed using scaled Wendland RBFs. Wendland and coauthors have explored multilevel schemes using scaled Wendland RBFs for solving both approximation problems and partial differential equations on spheres and compact regions in Euclidean space \cite{farrell,legia1,legia2,wendland}. A hurdle in proving convergence results is that by changing scale of the basis function we also change approximation spaces however, in relation to this,  we highlight the work of Narcowich et al. \cite{narcowich} who analysed a related scheme but required that sequences of approximation spaces were nested.

 As far as the authors are aware the extant  theoretical results on the multilevel method (briefly reviewed in the previous paragraph) apply only to basis functions with finite smoothness. In these cases the numerical stability is improved but one has to accept a saturation point on the accuracy. However, recently  multi-level approximation using scaled Gaussians (infinitely smooth and globally supported) has become of interest due to its key role in multilevel sparse kernel interpolation (MuSIK) and its quasi-interpolatory modification (Q-MuSIK), see \cite{georgoulis,usta}. These approaches have achieved successful results in different areas, see \cite{dong,zhao,usta} for details, and their success provides the motivation for this current work.
 Specifically, our aim here is to present a first convergence analysis of the multilevel approximation method  using the Gaussian  basis function. The approach we take  differs from the standard formulation in that we replace interpolation with quasi-interpolation. In order to make the analysis tractable we  will investigate the performance of the scheme when approximating univariate real valued functions with period one. The classical approach to this approximation problem is to use Fourier series, but over the past 50 years, many authors have used shifts of a univariate function \cite{golomb,kushpel,pinkus} and this approach has been adapted to the torus \cite{gomes} and the sphere \cite{wahba}.
 
The paper is organized as follows. In Section 2 we provide a precise statement of the problem we want to solve together with a description of the proposed  multilevel solution method with Gaussian quasi-interpolation. In addition, we will compose the key mathematical results that will be useful in the subsequent analysis. In Section 3 we will develop convergence estimates for the full algorithm for even functions (a sum of cosines). The proof for odd functions is similar. As in the theory of approximate approximation developed by Maz'ya and Schmidt \cite{mazya}, this error will have a part that is reducing at a fixed rate with each iteration, and a part which starts off extremely small (which we will call $\epsilon$) but will grow with a fixed rate with each iteration. In Section~\ref{numerics} we will present numerical examples.

  \section{Background and Preliminaries} \label{preliminaries}

Following Delvos \cite{delvos}, we let $\mathcal{C}$ denote the space of continuous real-valued function with period one which we equip with the uniform norm $\|f\|_{\infty}=\sup_{x \in \R}|f(x)|.$ Next we let $\mathcal{L}_{2}$
denote the Hilbert space of square integrable periodic functions with inner product
\[
(f,g):=\int_{0}^{1}f(x)g(x)dx.
\]
The exponentials are given by $e_{k}(x)=\exp(2\pi i k x)$ for $k \in \ZZ.$ The finite Fourier transform of $f \in  \mathcal{L}_{2}$ is given by $\widehat{f}_{k}=(f,e_{-k})$ for $k \in \ZZ$ and its inversion is the Fourier series of $f$ given by $\sum_{k=-\infty}^{\infty}\widehat{f}_{k}e_{k},$
which converges to $f$ in the $\mathcal{L}_{2}$-norm $\|\cdot\|_{2}$ induced by the inner product. Next we let $\mathcal{N}$ denote the space of functions $f \in  \mathcal{L}_{2}$ having absolutely convergent Fourier series, i.e., those for which the norm $\|f\|=\sum_{k=-\infty}^{\infty}|\widehat{f}_{k}|$
is finite. We have the inclusions $\mathcal{N} \subset \mathcal{C} \subset \mathcal{L}_{2}$ and so for any $f \in \mathcal{N}$ we have the estimates $\|f\|_{2}\le \|f\|_{\infty} \le \|f\|.$

In our convergence analysis we will need to measure the smoothness of our target functions more precisely and so,
to complete the review of pertinent function spaces, we shall also consider the periodic Sobolev spaces of order $s:$
\[
\mathcal{W}_s = \left \{ f\in \mathcal{L}_{2}: \| f \|_s = \left ( |\widehat{f}_{0}|^2 + \sum_{k \in \ZZ}^\infty k^{2s} |\widehat{f}_{k}|^2 \right )^{1/2} < \infty \right \}.
\]
For $s>\frac{1}{2}$ the Sobolev space  $\mathcal{W}_s$ is a subspace of $\mathcal{N}$ and therefore also of  $\mathcal{C}$.

We can now turn to the approximation problem which, at the most general level, can be stated as follows: for a general target function $f,$ construct an approximating function $s_{f}$ based on the data set $\{f(h\ell):\ell\in \ZZ \,\,\,{\rm{and}} \,\,\, 0<h<1\}.$ In our work we will  consider target functions with period one taken from either $\mathcal{N}$ or from an appropriate Sobolev space $\mathcal{W}_s$ with $ s>\frac{1}{2}.$ We will construct our approximation via Schoenberg's approach \cite{schoen} to quasi-interpolation with a Gaussian as the underlying basis function. Specifically, we will consider the following stationary quasi-interpolant
\EQ{quasi}{
Q_{h}(f)(x):=\sum_{\ell \in \ZZ}f(h\ell)\psi\left(\frac{x}{h}-\ell\right)\,\,\,\,{\rm{where}}\,\,\,\,
\psi(x)= \frac{1}{\sqrt{2\pi}}\exp\left(-\frac{x^{2}}{2}\right).
}

The implementation of the proposed method is as follows. We fix the initial (level one) set of sample points by choosing an appropriate integer $\ell$ and setting $ h= \frac{1}{2^{\ell}}.$ We then form the quasi-interpolant (\ref{quasi}) to the target function $f.$ As we move from one level to the next the spacing between the sample points decreases by a factor of $1/2,$ thus at level $p$ say the spacing is $h/2^{p-1}.$ At each subsequent level (beyond the first one) we form the quasi-interpolant to the residual function (from the previous stage) and this is then added to the current approximation. Continuing in this way we build up our approximation to $f;$ the algorithm terminates when the residuals are sufficiently small.
%

We close this section by developing some useful results connected to the quasi-interpolation scheme.
First we recall that that the Fourier transform of $\psi$ is $\widehat{\psi} (t)=\exp(-2\pi^2 t^2).$
Next we develop the quasi-interpolants to the family of exponentials $e_{m}(x).$ We shall assume $h=\frac{1}{n}$ where $n=1,2,\ldots,$ then by definition we have
\[
\begin{aligned}
 Q_{\frac{1}{n}}(e_{m})(x):&=
 \sum_{\ell \in \ZZ}e_{m}\left(\frac{\ell}{n}\right)\psi(nx-\ell)=\sum_{j =0}^{n-1}\sum_{\ell \in \ZZ}e_{m}\left(\frac{n\ell+j}{n}\right)\psi(nx-(n\ell+j))
 \\
 &=\sum_{j =0}^{n-1}e_{m}\left(\frac{j}{n}\right)\sum_{\ell \in \ZZ}\psi(n(x-\ell)-j).
 \end{aligned}
\]
Let $\sigma(x)$ denote the infinite sum appearing in the final line above. We note  that
 $\sigma(x)$ is $1-$periodic and so we can consider its Fourier expansion
\[
\sigma(x) = \sum_{\ell \in \ZZ}\psi(n(x-\ell)-j)=\sum_{k=-\infty}^{\infty}\widehat{\sigma}_{k}e_{k}(x)\,\,\,\,{\rm{where}}\,\,\,\,
\widehat{\sigma}_{k}=\int_{0}^{1}\sigma(x)e_{-k}(x)dx.
\]
Using the periodicity of $\sigma$ together with an appropriate shift and scale in the variable of integration one can show that the Fourier coefficients are given by:
\[
\widehat{\sigma}_{k} ={1 \over n} e_{-k}\left(\frac{j}{n}\right) \widehat{\psi}\left(\frac{k}{n}\right).
\]
Substituting this back into the expression for $Q_{\frac{1}{n}}(e_{m})(x))$ we see that for $m\in \ZZ,$
\EQ{gnexp}{
\begin{aligned}
Q_{\frac{1}{n}} e_m (x) & =  \sum_{j=0}^{n-1} e_{m}\left(\frac{j}{n}\right)\left({1 \over n}\sum_{k=-\infty}^{\infty} e_{-k}\left(\frac{j}{n}\right) \widehat{\psi}\left(\frac{k}{n}\right)e_{k}(x)\right)
\\
& =  \sum_{k=-\infty}^{\infty}\widehat{\psi}\left(\frac{k}{n}\right) e_k(x) \left ( {1 \over n} \sum_{j=0}^{n-1}e_{j}\left(\frac{m-k}{n}\right) \right)
   \\
& =  \sum_{k=-\infty}^{\infty} \widehat{\psi}\left(\frac{nk+m}{n}\right) e_{m+nk} (x)= \sum_{k=-\infty}^{\infty}
\widehat{\psi}\left(k+\frac{m}{n}\right) e_{m+nk} (x).
\end{aligned}}
Using (\ref{gnexp}) it is straight forward to derive similar expressions for the family of trigonometric functions. In this paper we will focus only upon even functions and so, in preparation, we shall derive the equivalent expressions for the cosine family $c_{m}(x)=\cos(2\pi m x),$  $m=0,1,2\ldots.$ The same techniques can be used  for the sine family too, but this is not our main focus.
\begin{lemma} \label{quasitrig} For $n = 1,2,3,\ldots $ we can set $h=\frac{1}{n},$ and we have
\[
\begin{aligned}
&(i)\quad Q_{\frac{1}{n}}c_m  =  \sum_{k=-\infty}^\infty \widehat{\psi}\left(k+\frac{m}{n}\right)c_{m+nk}\quad m=0,1,\ldots\\
&(ii)\quad Q_{\frac{1}{n}}c_m=Q_{\frac{1}{n}}c_{m+jn}, \quad j \in \ZZ.
\end{aligned}\]
\end{lemma}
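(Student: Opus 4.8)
The plan is to reduce both parts to the exponential identity (\ref{gnexp}), using only the linearity of $Q_{1/n}$ and the reflection symmetry $\widehat{\psi}(-t)=\widehat{\psi}(t)$ of the Gaussian transform $\widehat{\psi}(t)=\exp(-2\pi^2 t^2)$.

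For part (i) I would first write the cosine as a combination of exponentials, $c_m=\frac{1}{2}(e_m+e_{-m})$, and use the fact that $Q_{1/n}$ is linear (immediate from its definition as a sampling operator in (\ref{quasi})) to get $Q_{1/n}c_m=\frac{1}{2}\bigl(Q_{1/n}e_m+Q_{1/n}e_{-m}\bigr)$. Applying (\ref{gnexp}) to the two terms yields
\[
Q_{1/n}e_m=\sum_{k\in\ZZ}\widehat{\psi}\!\left(k+\frac{m}{n}\right)e_{m+nk},
\qquad
Q_{1/n}e_{-m}=\sum_{k\in\ZZ}\widehat{\psi}\!\left(k-\frac{m}{n}\right)e_{-m+nk}.
\]
In the second series I would relabel $k\mapsto-k$ and invoke the evenness of $\widehat{\psi}$ to rewrite the summand as $\widehat{\psi}(k+\frac{m}{n})\,e_{-(m+nk)}$; adding the two series term by term and using $e_r+e_{-r}=2c_r$ then collapses everything to $\sum_{k\in\ZZ}\widehat{\psi}(k+\frac{m}{n})c_{m+nk}$, which is (i). Because $\widehat{\psi}$ decays like a Gaussian in $k$, all of these doubly-infinite series converge absolutely, so the relabelling and term-by-term addition are legitimate; I would record this observation once at the outset.

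For part (ii) the cleanest route is to argue at the level of the sampled data rather than the transform. At the nodes $x=\ell/n$ we have $c_{m+jn}(\ell/n)=\cos\bigl(2\pi m\ell/n+2\pi j\ell\bigr)=\cos(2\pi m\ell/n)=c_m(\ell/n)$ for every $\ell\in\ZZ$, since $j\ell\in\ZZ$. As $Q_{1/n}$ depends on its argument only through these sampled values, the two quasi-interpolants must coincide. (Equivalently, one can derive (ii) from (i) by the index shift $k'=k+j$, after noting that the computation proving (i) never used $m\ge 0$ and hence holds for all $m\in\ZZ$.)

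The work here is essentially bookkeeping rather than a genuine obstacle. The one point that needs care is the interaction, in part (i), between the relabelling $k\mapsto-k$ and the two symmetries in play---the evenness of $\widehat{\psi}$ and the convention $c_{-r}=c_r$ that absorbs the negative indices $m+nk<0$---but these two reflections conspire to make the final cosine series come out correctly.
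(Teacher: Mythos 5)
Your proof of part (i) is correct and is essentially the paper's own argument: the decomposition $c_m=\frac{1}{2}(e_m+e_{-m})$, the application of (\ref{gnexp}), the relabelling $k\mapsto -k$ in the second series, and the evenness of $\widehat{\psi}$ are exactly the steps in the paper, and your remark on absolute convergence (which the paper leaves implicit) is a harmless strengthening. For part (ii) you diverge slightly: the paper simply declares it ``an immediate consequence of the first,'' i.e.\ the index-shift $k\mapsto k+j$ in the series from (i) that you mention parenthetically, whereas your primary argument works at the level of the data, observing that $c_{m+jn}(\ell/n)=c_m(\ell/n)$ for all $\ell\in\ZZ$ so that the sampling operator $Q_{1/n}$ cannot distinguish the two functions. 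Your route is more elementary and arguably more robust, since it uses nothing about the Gaussian or the series representation and sidesteps the small issue that (i) is stated only for $m\ge 0$ while $m+jn$ may be negative (a point you correctly flag and resolve for the series route). It is also worth noting that your sampling argument is precisely the one the paper itself deploys later, in the proof of Proposition~\ref{highfreq}, where $Q_1 c_m = Q_1 c_0$ is justified by $c_m(k)=c_0(k)$ for $k\in\ZZ$; so your version of (ii) anticipates how the lemma is actually used.
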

\begin{proof}
 For the first equation, we can use the identity $c_{m}=\frac{1}{2}(e_{m}+e_{-m})$ together with the fact that $\widehat{\psi}(-t)=\widehat{\psi}(t)$ to deduce
\begin{eqnarray*}
Q_{\frac{1}{n}}c_m  & = & {1 \over 2}\left(Q_{\frac{1}{n}}e_m+ Q_{\frac{1}{n}}e_{-m}\right)\\
& = &{1 \over 2} \left(\sum_{k=-\infty}^{\infty}
\widehat{\psi}\left(k+\frac{m}{n}\right) e_{m+nk}+\sum_{k=-\infty}^{\infty}
\widehat{\psi}\left(k-\frac{m}{n}\right) e_{-m+nk}\right)\\
& = &{1 \over 2} \left(\sum_{k=-\infty}^{\infty}
\widehat{\psi}\left(k+\frac{m}{n}\right) e_{m+nk}+\sum_{k=-\infty}^{\infty}
\widehat{\psi}\left(-k-\frac{m}{n}\right) e_{-m-nk}\right)\\
& = &\sum_{k=-\infty}^{\infty}
\widehat{\psi}\left(k+\frac{m}{n}\right)\frac{e_{m+nk}+e_{-m-nk}}{2}=\sum_{k=-\infty}^{\infty}
\widehat{\psi}\left(k+\frac{m}{n}\right)c_{m+nk}.
\end{eqnarray*}
The second equation is an immediate consequence of the first. \end{proof}

Our aim is to investigate the convergence rate of our proposed multi-level quasi-interpolation method. To set up the basic framework we shall assume, to begin with, that the target function $f\in \mathcal{N}$ and so possesses a Fourier series $\sum_{k=-\infty}^{\infty}\widehat{f}_{k}e_{k}.$ Let $h=\frac{1}{n}$ denote the spacing of the points at the first level. Then,
using (\ref{gnexp}), the quasi-interpolant is given by
\EQ{gngen}{
Q_{\frac{1}{n}} f  = \sum_{k \in \ZZ} \widehat{f}_{k}Q_{\frac{1}{n}}e_k (x)= \sum_{k=-\infty}^{\infty}
\widehat{f}_{k}\sum_{\ell \in \ZZ} \widehat{\psi}\left(\ell+\frac{k}{n}\right) e_{k+n\ell} (x).}

To describe the error at the first level we write  $E_{\frac{1}{n}}(f)=f-Q_{\frac{1}{n}}f.$
The error at the subsequent levels is defined recursively from here and we shall use the following notation. At level $p$ the multilevel error is given by
\[
M_{\frac{1}{n},p}(f)=E_{\frac{1}{2^{p-1}}\frac{1}{n}}M_{\frac{1}{n},p-1}(f),
\]
where we set $M_{h,0}=I$ to be the identity  operator so that $M_{\frac{1}{n},1}(f)=E_{\frac{1}{n}}(f).$

We begin our  investigation by measuring the norm of the quasi-interpolant (\ref{gngen}).
\EQ{initialbound}{
\|Q_{\frac{1}{n}} f\|\le \sum_{k \in \ZZ} \left|\widehat{f}_{k} \right|\sum_{\ell \in \ZZ} \widehat{\psi}\left(\ell+\frac{k}{n}\right).
}
Following Baxter \cite{baxter} an application of the Poisson summation formula yeilds
\[
 \sum_{\ell \in \ZZ} \widehat{\psi}\left(\ell+\frac{k}{n}\right)=\sum_{\ell \in \ZZ} \exp\left(-2\pi^{2}\left(\ell+\frac{k}{n}\right)^{2}\right)=\frac{1}{\sqrt{2\pi}}\sum_{\ell \in \ZZ}e^{-\frac{\ell^{2}}{2}}e^{ \frac{2\ell i\pi k}{n}},
\]
and we observe that this is a theta function of Jacobi type
\EQ{thetadef}{
\theta_{3}(z,q)=\sum_{\ell \in \ZZ}q^{\ell^{2}}e^{2 \ell iz}\,\,\,\,\,q \in \C \,\,\,{\rm{and}}\,\,\, |q|<1.
}
The following product function representation is found in \cite{gradshteyn} (8.181.2)
\EQ{prodreptheta}{
\theta_{3}(z,q)=\prod_{\ell=1}^{\infty}(1+2q^{2\ell-1}\cos(2z)+q^{2(2\ell-1)})(1-q^{2\ell})
}
If we choose $q = e^{-\frac{1}{2}}$ we can write
\[
\begin{aligned}
E(t):&=\sum_{\ell \in \ZZ} \widehat{\psi}\left(\ell+t\right)=
\frac{1}{\sqrt{2\pi}}\theta_{3}\left(\pi t,e^{-\frac{1}{2}}\right)\\
&=\frac{1}{\sqrt{2\pi}}\prod_{\ell=1}^{\infty}(1+2e^{-\ell+\frac{1}{2}}\cos(2\pi t)+e^{-(2\ell-1)})(1-e^{-\ell}).
\end{aligned}
\]

We observe that $E$ is $1-$periodic and, due to the product representation, it is decreasing on $[0,\frac{1}{2}]$ and increasing on $[\frac{1}{2},1]$ consequently $E$ attains its global max at zero. In view of these observation we can revisit (\ref{initialbound}) and deduce that $\|Q_{\frac{1}{n}} f\|\le \|f\|E(0).$ In view of (\ref{thetadef}) we have
\[
\begin{aligned}
E(0):=\sum_{\ell \in \ZZ}\exp\left(-2\pi^{2}\ell^{2}\right)=\theta_{3}(0,e^{-2\pi^{2}})=1+2e^{-2\pi^{2}}+2e^{-8\pi^{2}}+2e^{-18\pi^{2}}+\ldots
\end{aligned}
\]
where the right hand side are the leading terms in the expansion of (\ref{prodreptheta}) see \cite[16.38.5]{AS}. We can summarise the development above in the following theorem.
 \begin{proposition} \label{itbound}
Suppose $f \in \mathcal{N}$. Then, for $n = 1,2,\cdots$,
\[
\|Q_{\frac{1}{n}} f\|\le a\|f\|\,\,\,\, {\rm{where}} \,\, a = 1+3e^{-2\pi^{2}}=1+3\widehat{\psi}(1).
\]
 Consequently, setting $A = 1+a=2+3\widehat{\psi}(1)$ we have that
$$
\| E_{\frac{1}{n}}f \| = \| f-Q_{\frac{1}{n}} f \| \le A \| f \|\quad{\rm{and}}\quad \| M_{\frac{1}{n},p} f \| \le A^p \| f \|.
$$
\end{proposition}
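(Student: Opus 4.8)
The plan is to reduce the whole statement to the single inequality $\|Q_{1/n}f\|\le E(0)\|f\|$, which is already within reach from (\ref{initialbound}), and then to pin down the constant $E(0)$ by a crude tail estimate. First I would return to (\ref{initialbound}) and recognise the inner sum as $E(k/n)$, where $E$ is the $1$-periodic function introduced above. The excerpt has established, via the product representation and the monotonicity of $E$ on $[0,\tfrac12]$, that $E$ attains its global maximum at the origin; hence each factor $E(k/n)\le E(0)$, and we obtain $\|Q_{1/n}f\|\le E(0)\sum_{k}|\widehat f_k| = E(0)\|f\|$, \emph{uniformly in} $n$.

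The only genuine computation is to check that $E(0)\le a=1+3e^{-2\pi^2}$. Writing $E(0)=1+2e^{-2\pi^2}+2\sum_{\ell\ge 2}e^{-2\pi^2\ell^2}$, I would bound the tail using $\ell^2\ge 2\ell$ for $\ell\ge 2$, so that $\sum_{\ell\ge 2}e^{-2\pi^2\ell^2}\le\sum_{\ell\ge 2}e^{-4\pi^2\ell}=e^{-8\pi^2}/(1-e^{-4\pi^2})$. Twice this is comfortably below $e^{-2\pi^2}$ (equivalently $2e^{-6\pi^2}\le 1-e^{-4\pi^2}$, which holds with enormous margin), so $E(0)\le 1+3e^{-2\pi^2}=a$. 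This gives the first displayed inequality, and the error bound follows at once from the triangle inequality for $\|\cdot\|$: $\|E_{1/n}f\|=\|f-Q_{1/n}f\|\le\|f\|+a\|f\|=A\|f\|$. I would note in passing that $\|Q_{1/n}f\|\le a\|f\|$ shows $Q_{1/n}$ maps $\mathcal N$ into itself, so $E_{1/n}f\in\mathcal N$ and the estimate can legitimately be iterated.

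The multilevel bound is then a one-line induction on $p$. The base case $p=0$ is $\|f\|=A^0\|f\|$. For the inductive step the defining recursion reads $M_{1/n,p}f=E_{1/m}\,M_{1/n,p-1}f$ with $m=2^{p-1}n$, again a positive integer, so applying the level-one estimate with $g=M_{1/n,p-1}f$ and using the inductive hypothesis gives $\|M_{1/n,p}f\|\le A\|M_{1/n,p-1}f\|\le A\cdot A^{p-1}\|f\|=A^p\|f\|$. The point I expect to matter most is precisely this: because the level-one bound holds for every positive integer $n$ rather than for one fixed spacing, the same constant $A$ governs every level, and the induction closes. There is no single hard estimate here, only the observation that uniformity in the scale is what drives the whole iteration.
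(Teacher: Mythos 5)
Your proposal is correct and follows essentially the same route as the paper: the bound (\ref{initialbound}) combined with the theta-function analysis showing $E$ peaks at $0$, the numerical bound $E(0)\le 1+3e^{-2\pi^{2}}$, the triangle inequality for $E_{\frac{1}{n}}$, and iteration of the level-one estimate for the multilevel bound. If anything, you are slightly more careful than the paper, which states the proposition as a summary of the preceding development without writing out your explicit tail estimate $2\sum_{\ell\ge 2}e^{-2\pi^{2}\ell^{2}}\le e^{-2\pi^{2}}$ or the observation that $Q_{\frac{1}{n}}$ preserves $\mathcal{N}$ and that the spacings $\frac{1}{2^{p-1}n}$ remain reciprocals of integers, both of which are needed to close the induction.
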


\section{Convergence of the discrete algorithm} \label{fullalg}

In this section we will deal with even functions only, that is linear combinations of cosines. The proof for odd functions is the same, and the general case follows by decomposing a function into odd and even parts. To set the scene for what follows we clarify that the target function to which the algorithm is applied is $c_{m},$ the cosine function with a fixed frequency $m$ and the spacing between the data points at the first level is
given by $h=1/2^{\ell}$ for some $\ell\ge 2.$
We will split our investigation into two cases. First we deal with the situation when the initial $2^{\ell}$  sample points at level one is greater than the cosine frequency $m$ and secondly we examine the case when $2^{\ell}<m.$

\subsection{Cosine frequency  $<$ initial number of sample points:}

Assume that $m$ satisfies $hm<\frac{1}{2}$ or equivalently $m <2^{\ell-1}.$ In this setting
  we will prove a recursive formula for the multilevel approximation, aggregating factors of the size $\widehat{\psi}(2) \sim 10^{-35}$, in order to make the analysis tractable. With this in mind we fix the tolerance $\epsilon = 2 \widehat{\psi}(2)$. Final errors will have a contain a multiple of $\epsilon$.\\
$\diamond$ \textbf{Level one:} Using Lemma \ref{quasitrig},  the quasi-interpolant of $c_{m}$ in terms of $h$ is
\[
Q_{h}c_{m}=\sum_{k=1}^\infty\widehat{\psi}(hm-k)c_{m-\frac{k}{h}}+\widehat{\psi}(hm)c_{m} +\sum_{k=1}^\infty\widehat{\psi}(hm+k)c_{m+\frac{k}{h}}.
\]
Hence
\[
\begin{aligned}
E_{h}c_{m}&:=c_{m}-Q_{h}c_{m}\\
&=-\sum_{k=0}^\infty\widehat{\psi}(hm-k)c_{m-\frac{k}{h}}+\left(1-\widehat{\psi}(hm)\right)c_{m} -\sum_{k=0}^\infty\widehat{\psi}(hm+k)c_{m+\frac{k}{h}}.
\end{aligned}
\]
The plan of attack is to investigate the size of the residual error at each level of the algorithm by taking a central truncation of its series representation and examining this and the remainder separately.
 The amount of terms in the central truncation grows from level to level. For a typical level $p$  the truncation we have in mind consists of the contributions from $c_{m-\frac{2^{p}-j}{h}}$ for $j=0,\ldots 2^{p}-1$ and the contributions from $c_{m+\frac{j}{h}}$ also for $j=0,\ldots 2^{p}-1.$ At level one the split is as follows
\EQ{firsterror}{
E_{h}(c_{m}):= \underbrace{\overline{\alpha}_{0}^{(1)}c_{m-\frac{2}{h}}+\overline{\alpha}_{1}^{(1)}c_{m-\frac{1}{h}}
+\alpha_{0}^{(1)}c_{m}+\alpha_{1}^{(1)}c_{m+\frac{1}{h}}}_{=T_{h,1}c_{m}}+g_{1},
}
where $T_{h,1}c_{m}$ is the level one truncation whose coefficients are given by
\EQ{startcoefs}{
\begin{aligned}
&\overline{\alpha}_{0}^{(1)}=-\widehat{\psi}(hm-2),\,\,\quad \overline{\alpha}_{1}^{(1)}=-\widehat{\psi}(hm-1),\\
&\alpha_{0}^{(1)}=(1-\widehat{\psi}(hm)),\,\,\quad \alpha_{1}^{(1)}=-\widehat{\psi}(hm+1).
\end{aligned}
}
The function $g_{1}$ is the remainder term and is given by
\[
g_{1}=-\widehat{\psi}(hm+2)c_{m+\frac{1}{h}}-\sum_{k=3}^\infty\left(\widehat{\psi}(hm+k)c_{m+\frac{k}{h}}+
\widehat{\psi}(hm-k)c_{m-\frac{k}{h}}
\right).\]
In view of the fact that $hm<\frac{1}{2}$ we have the following bound for $g_{1}$:
\EQ{firstgbound}{
\|g_{1}\|\le \widehat{\psi}(2)
+\sum_{k=3}^{\infty}\widehat{\psi}(k)
+\widehat{\psi}\left(k-\frac{1}{2}\right)
\le 2\widehat{\psi}(2)=\epsilon.
}
We remark that for a crude bound on the truncation part we can evoke Proposition \ref{itbound}. In particular, this allows us to deduce that
\EQ{firstTbound}{
\|T_{h,1}c_{m}\|\le \|E_{h}c_{m}\|\le A.
}
$\diamond$ \textbf{Level two}. To consider the error at the second level, where the spacing is now $\frac{h}{2}$, we consider
\[
Q_{\frac{h}{2}}(c_{m}-Q_{h}c_{m})=Q_{\frac{h}{2}}T_{h,1}c_{m}+Q_{\frac{h}{2}}(g_{1}).
\]
Focussing on the truncation we can use  Lemma \ref{quasitrig} $(ii)$ to deduce that
\[
\begin{aligned}
Q_{\frac{h}{2}}T_{h,1}c_{m}&=\overline{\alpha}_{0}^{(1)}Q_{\frac{h}{2}}c_{m-\frac{2}{h}}
+\overline{\alpha}_{1}^{(1)}Q_{\frac{h}{2}}c_{m-\frac{1}{h}}
+\alpha_{0}^{(1)}Q_{\frac{h}{2}}c_{m}+\alpha_{1}^{(1)}Q_{\frac{h}{2}}c_{m+\frac{1}{h}}\\
&=(\overline{\alpha}_{0}^{(1)}+\alpha_{0}^{(1)})Q_{\frac{h}{2}}c_{m}+
(\overline{\alpha}_{1}^{(1)}+\alpha_{1}^{(1)})Q_{\frac{h}{2}}c_{m+\frac{1}{h}}.
\end{aligned}
\]
Now appealing to Lemma \ref{quasitrig} we have that
\[
\begin{aligned}
&Q_{\frac{h}{2}}c_{m}=
\sum_{k=1}^\infty\widehat{\psi}\left(\frac{hm}{2}-k\right)c_{m-\frac{2k}{h}}+\widehat{\psi}\left(\frac{hm}{2}\right)c_{m} +\sum_{k=1}^\infty\widehat{\psi}\left(\frac{hm}{2}+k\right)c_{m+\frac{2k}{h}}\\
&=\widehat{\psi}\left(\frac{hm}{2}-2\right)c_{m-\frac{4}{h}}+
\widehat{\psi}\left(\frac{hm}{2}-1\right)c_{m-\frac{2}{h}}+\widehat{\psi}\left(\frac{hm}{2}\right)c_{m}
+\widehat{\psi}\left(\frac{hm}{2}+1\right)c_{m+\frac{2}{h}}+g_{2}^{(0)},
\end{aligned}
\]
where
\[
g_{2}^{(0)}=\widehat{\psi}\left(\frac{hm}{2}+2\right)c_{m+\frac{4}{h}}+\sum_{k=3}^{\infty}
\left(\widehat{\psi}\left(\frac{hm}{2}-k\right)c_{m-\frac{2k}{h}}
+\widehat{\psi}\left(\frac{hm}{2}+k\right)c_{m+\frac{2k}{h}}\right).
\]
In a similar fashion we can show that
\[
\begin{aligned}
& Q_{\frac{h}{2}}c_{m+\frac{1}{h}}\\
&=\widehat{\psi}\left(\frac{hm-3}{2}\right)c_{m-\frac{3}{h}}+
\widehat{\psi}\left(\frac{hm-1}{2}\right)c_{m-\frac{1}{h}}
+\widehat{\psi}\left(\frac{hm+1}{2}\right)c_{m+\frac{1}{h}}+\widehat{\psi}\left(\frac{hm+3}{2}\right)c_{m+\frac{3}{h}}
+g_{2}^{(1)},
\end{aligned}
\]
where
\[
g_{2}^{(1)}=\widehat{\psi}\left(\frac{hm+5}{2}\right)c_{m+\frac{5}{h}}+\sum_{k=3}^{\infty}
\left(\widehat{\psi}\left(\frac{hm+1-2k}{2}\right)c_{m+
\frac{1-2k}{h}}
+\widehat{\psi}\left(\frac{hm+1+2k}{2}\right)c_{m+\frac{1+2k}{h}}\right).
\]
In view of the representations of $g_{2}^{(0)}$ and $g_{2}^{(1)}$ we can bound these functions in the same way as we did for $g_{1},$ see (\ref{firstgbound}), specifically $\| g_{2}^{(j)}\|\le  2\widehat{\psi}(2)=\epsilon,$   $(j=0,1).$
With this we can develop the level two error expression:
\[
\begin{aligned}
M_{h,2}c_{m}= E_{\frac{1}{2h}}(T_{h,1}c_{m}+g_{1})
= T_{h,1}c_{m}-Q_{\frac{h}{2}}T_{h,1}c_{m}+g_{1}-Q_{\frac{h}{2}}g_{1}=T_{h,2}c_{m}+g_{2}
\end{aligned},
\]
where the remainder is given by
\[
g_{2}=g_{1}-Q_{\frac{h}{2}}g_{1}-(\overline{\alpha}_{0}^{(1)}+\alpha_{0}^{(1)})g_{2}^{(0)}-
(\overline{\alpha}_{1}^{(1)}+\alpha_{1}^{(1)})g_{2}^{(1)},
\]
and the truncation
\[
\begin{aligned}
T_{h,2}c_{m}&=\overline{\alpha}_{0}^{(2)}c_{m-\frac{4}{h}}+\overline{\alpha}_{1}^{(2)}c_{m-\frac{3}{h}}+
\overline{\alpha}_{3}^{(2)}c_{m-\frac{2}{h}}+\overline{\alpha}_{4}^{(2)}c_{m-\frac{1}{h}}\\
&
+\alpha_{0}^{(2)}c_{m}+\alpha_{1}^{(2)}c_{m+\frac{1}{h}}+\alpha_{2}^{21)}c_{m+\frac{2}{h}}
+\alpha_{3}^{(2)}c_{m+\frac{3}{h}}.
\end{aligned}
\]
Following an inspection of the error expansion, the coefficients introduced above are
\[
\begin{aligned}
\overline{\alpha}_{j}^{(2)}&=-(\overline{\alpha}_{j}^{(1)}+\alpha_{j}^{(1)})
\widehat{\psi}\left(\frac{hm+j}{2}-2\right)\,\,\,\, (j=0,1),\\
\overline{\alpha}_{2^{1}+j}^{(2)}&=\overline{\alpha}_{j}^{(1)}-
(\overline{\alpha}_{j}^{(1)}+\alpha_{j}^{(1)})\widehat{\psi}\left(\frac{hm+j}{2}-1\right)\,\,\,\, (j=0,1),\\
\alpha_{j}^{(2)}&=\alpha_{j}^{(1)}-
(\overline{\alpha}_{j}^{(1)}+\alpha_{j}^{(1)})\widehat{\psi}\left(\frac{hm+j}{2}\right)\,\,\,\, (j=0,1),\\
\alpha_{2^{1}+j}^{(2)}&=-(\overline{\alpha}_{j}^{(1)}+\alpha_{j}^{(1)})\widehat{\psi}\left(\frac{hm+j}{2}+1\right)\,\,\,\, (j=0,1).\\
\end{aligned}
\]

To investigate the size of the remainder at level two we can proceed as follows.
\[
\begin{aligned}
\|g_{2}\|&\le \|g_{1}-Q_{\frac{h}{2}}g_{1}\|+\left(|\overline{\alpha}_{0}^{(1)}|+|\alpha_{0}^{(1)}|\right)\|g_{0}^{(1)}\|
+\left(|\overline{\alpha}_{1}^{(1)}|+|\alpha_{1}^{(1)}|\right)\|g_{1}^{(1)}\|\\
&\le \|E_{h}g_{1}\|+\left(|\overline{\alpha}_{0}^{(1)}|+|\alpha_{0}^{(1)}|
+|\overline{\alpha}_{1}^{(1)}|+|\alpha_{1}^{(1)}|\right)\epsilon \le A\|g_{1}\|+\|T_{h,1}c_{m}\|\epsilon\le 2A\epsilon.
\end{aligned}
\]


The preceding analysis of the first two levels provides sufficient insight to establish the following, more general result.
\begin{proposition} \label{propiter}
Let $m$ be the fixed frequency of the cosine $c_{m}$ and assume $m < 2^{\ell-1}$, for some $\ell \ge 2$. Let $h = 1/2^{\ell}$ denote the spacing at level one. Then
\beqns
M_{h,p} c_m & =  & T_{h,p} c_m + g_p,\quad {\rm{where}}\quad T_{h,p} c_m  =  \sum_{j=0}^{2^p-1} ( \overline{\alpha}_{j}^{(p)}  c_{m-\frac{2^{p}-j}{h}} +\alpha_{j}^{(p)} c_{m + \frac{j}{h}}),
\eeqns
where $\| g_p \| \le pA^{p-1} \epsilon$. The truncation coefficients are defined recursively.
Specifically, for $p=1$ the initial coefficients are given by (\ref{startcoefs}), and then for $p>1$ we have
 \EQ{coeffs}{
\begin{aligned}
\overline{\alpha}_{j}^{(p+1)}&=-(\overline{\alpha}_{j}^{(p)}+\alpha_{j}^{(p)})
\widehat{\psi}\left(\frac{hm+j}{2^{p}}-2\right)\,\,\,\, (j=0,1,\ldots,2^{p}-1),\\
\overline{\alpha}_{2^{p}+j}^{(p+1)}&=\overline{\alpha}_{j}^{(p)}-
(\overline{\alpha}_{j}^{(p)}+\alpha_{j}^{(p)})\widehat{\psi}\left(\frac{hm+j}{2^{p}}-1\right)\,\,\,\, (j=0,1,\ldots,2^{p}-1),\\
\alpha_{j}^{(p+1)}&=\alpha_{j}^{(p)}-
(\overline{\alpha}_{j}^{(p)}+\alpha_{j}^{(p)})\widehat{\psi}\left(\frac{hm+j}{2^{p}}\right)\,\,\,\, (j=0,1,\ldots,2^{p}-1),\\
\alpha_{2^{p}+j}^{(p+1)}&=-(\overline{\alpha}_{j}^{(p)}+\alpha_{j}^{(p)})\widehat{\psi}\left(\frac{hm+j}{2^{p}}+1\right)\,\,\,\, (j=0,1,\ldots,2^{p}-1).\\
\end{aligned}
}

\end{proposition}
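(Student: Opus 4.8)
The plan is to argue by induction on the level $p$, the cases $p=1,2$ having been settled in the computations preceding the statement. Assuming the decomposition $M_{h,p}c_m = T_{h,p}c_m + g_p$ with $T_{h,p}c_m$ of the stated form, coefficients obeying (\ref{coeffs}), and $\|g_p\|\le pA^{p-1}\epsilon$, I would note that the spacing at level $p+1$ is $h/2^p = 1/2^{\ell+p}$, and apply $E_{h/2^p}=I-Q_{h/2^p}$ to the hypothesis to obtain $M_{h,p+1}c_m = \bigl(T_{h,p}c_m - Q_{h/2^p}T_{h,p}c_m\bigr) + E_{h/2^p}g_p$. The truncation part and the remainder part are then analysed separately.

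The heart of the step is the action of $Q_{h/2^p}$ on the truncation. Since $c_{m-\frac{2^p-j}{h}}$ and $c_{m+\frac{j}{h}}$ differ in frequency by exactly $2^p/h = 2^{\ell+p}$, Lemma \ref{quasitrig}$(ii)$ collapses them onto a common quasi-interpolant, giving $Q_{h/2^p}T_{h,p}c_m = \sum_{j=0}^{2^p-1}(\overline{\alpha}_j^{(p)}+\alpha_j^{(p)})\,Q_{h/2^p}c_{m+\frac{j}{h}}$, exactly as observed at level two. Lemma \ref{quasitrig}$(i)$ then expands $Q_{h/2^p}c_{m+\frac{j}{h}} = \sum_k \widehat{\psi}(k+t_j)\,c_{m+\frac{j+2^pk}{h}}$ with $t_j = \frac{hm+j}{2^p}$. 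The standing hypothesis $hm<\frac12$ together with $0\le j\le 2^p-1$ forces $t_j\in[0,1)$, which is precisely what pins the central contributions to $k=-2,-1,0,1$; these yield cosines at $m-\frac{2^{p+1}-j}{h}$, $m-\frac{2^p-j}{h}$, $m+\frac{j}{h}$, $m+\frac{2^p+j}{h}$ with weights $\widehat{\psi}(t_j-2),\widehat{\psi}(t_j-1),\widehat{\psi}(t_j),\widehat{\psi}(t_j+1)$.

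Next I would collect coefficients by frequency. As $j$ ranges over $0,\ldots,2^p-1$ the four families are pairwise disjoint and tile the level-$(p+1)$ band $\{c_{m-\frac{2^{p+1}-j'}{h}},c_{m+\frac{j'}{h}}:j'=0,\ldots,2^{p+1}-1\}$ (using $c_{-M}=c_M$ for the negative frequencies), and since $T_{h,p}c_m$ feeds only the two inner families, reading off the coefficient of each cosine in $T_{h,p}c_m - Q_{h/2^p}T_{h,p}c_m$ reproduces the four identities of (\ref{coeffs}) line by line. The surplus terms $k\notin\{-2,-1,0,1\}$ form $g_{p+1}^{(j)}$; because $t_j\in[0,1)$ their $\widehat{\psi}$-arguments all have modulus $\ge 2$, so the bound (\ref{firstgbound}) applies verbatim to give $\|g_{p+1}^{(j)}\|\le\epsilon$, as for $g_2^{(0)},g_2^{(1)}$. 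The remainder is therefore $g_{p+1}=E_{h/2^p}g_p - \sum_{j=0}^{2^p-1}(\overline{\alpha}_j^{(p)}+\alpha_j^{(p)})g_{p+1}^{(j)}$, and Proposition \ref{itbound} (applicable since $1/2^{\ell+p}$ has integer reciprocal) gives $\|E_{h/2^p}g_p\|\le A\|g_p\|\le pA^p\epsilon$, while the sum contributes at most $S_p\,\epsilon$ with $S_p:=\sum_j(|\overline{\alpha}_j^{(p)}|+|\alpha_j^{(p)}|)=\|T_{h,p}c_m\|$. To close I need $S_p\le A^p$, which I would get not from disjointness of $T_{h,p}$ and $g_p$ (this fails for $p\ge2$, since $E_{h/2^p}g_p$ reintroduces central frequencies) but by running a parallel recursion on the coefficient sums: the triangle inequality applied to (\ref{coeffs}) together with $\widehat{\psi}(t_j-2)+\widehat{\psi}(t_j-1)+\widehat{\psi}(t_j)+\widehat{\psi}(t_j+1)\le E(0)\le a$ yields $S_{p+1}\le(1+a)S_p=A\,S_p$, hence $S_p\le A^p$ from $S_1\le A$. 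Combining, $\|g_{p+1}\|\le pA^p\epsilon+A^p\epsilon=(p+1)A^p\epsilon$, as required.

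I expect the main obstacle to be the frequency bookkeeping in the middle step: one must check that for every $j$ the four central terms occupy the predicted slots of the level-$(p+1)$ band without collision and that no surplus term falls back into the band, which is exactly where $hm<\frac12$ is indispensable. A secondary point worth flagging is that the naive bound $\|T_{h,p}c_m\|\le\|M_{h,p}c_m\|\le A^p$ is unavailable beyond level one, so the auxiliary estimate $S_{p+1}\le A\,S_p$ must be carried alongside the principal induction.
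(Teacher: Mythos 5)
Correct, and essentially the paper's own route: the paper's proof is literally ``induct on $p$, mimicking the level-two analysis,'' and your argument carries out exactly that induction --- Lemma~\ref{quasitrig}(ii) to collapse the paired frequencies $c_{m-\frac{2^p-j}{h}}$ and $c_{m+\frac{j}{h}}$, Lemma~\ref{quasitrig}(i) with $t_j=\frac{hm+j}{2^p}\in[0,1)$ to pin the central aliases to $k=-2,-1,0,1$ (reproducing (\ref{coeffs}) line by line), and the same remainder bookkeeping $\|g_{p+1}\|\le A\|g_p\|+\|T_{h,p}c_m\|\epsilon$ used at level two. Your one genuine addition is the parallel recursion $S_{p+1}\le(1+E(0))S_p\le A\,S_p$ yielding $\|T_{h,p}c_m\|\le A^p$: the paper's level-two step relies on $\|T_{h,1}c_m\|\le\|E_h c_m\|\le A$, which, as you rightly observe, does not iterate for $p\ge 2$ because truncation and remainder frequencies can overlap, so your auxiliary estimate is precisely the detail the paper's one-line proof leaves implicit.
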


\begin{proof} The result can be established by induction on $p.$ Indeed, the preceding error analysis for the level one case establishes the result for $p=1.$ Assuming the result for a general level $p$ (the inductive hypothesis) one can then  mimic the  methodology of the level two analysis to inductively establish the stated result for the $p+1$ level.
 \end{proof}


In order to make further progress we need to investigate how the size of truncation behaves as the algorithm proceeds through the levels. Thus we need to provide estimates on the sizes of the coefficients that appear in the truncation representation. In anticipation we provide the following elementary bounds on the $\widehat{\psi}$ multipliers. For $j=0,1,\ldots,2^{p}-1,$ and since $\widehat{\psi}(t)$ is decreasing for $t\ge0, $ we have that
\EQ{extreme1}{
\begin{aligned}
\widehat{\psi}\left(\frac{hm+j}{2^{p}}-2\right)&=\widehat{\psi}\left(2-\frac{hm+j}{2^{p}}\right)\le
  \widehat{\psi}\left(2-\frac{hm+2^{p}-1}{2^{p}}\right)\\
  &=\widehat{\psi}\left(1+\frac{1-hm}{2^{p}}\right)\le \widehat{\psi}\left(1\right)\quad {\rm{since}}\quad 0<hm<\frac{1}{2}.
  \end{aligned}
  }
  In addition we have the straight-forward bound
  \EQ{extreme2}{
  \widehat{\psi}\left(\frac{hm+j}{2^{p}}+1\right)\le \widehat{\psi}\left(1\right)\quad (j=0,1,\ldots, 2^{p}-1).
  }
  As a final comment we observe that since $0<\widehat{\psi}(t)\le 1$ for $t\ge 0$ then, for $j=0,1,\ldots 2^{p}-1$ we have the following crude bounds:
  \EQ{crude}{
  |\overline{\alpha}_{2^{p}+j}^{(p+1)}|\le |\overline{\alpha}_{j}^{(p)}|+|\alpha_{j}^{(p)}|\quad {\rm{and}}\quad
  |\alpha_{j}^{(p+1)}|\le |\overline{\alpha}_{j}^{(p)}|+|\alpha_{j}^{(p)}|.}

  In what follows we will assume that the algorithm is at or beyond the third level. At this stage  the cosine expansion of the truncation  $T_{h,p}c_{m}$  contains contributions from   cosines whose frequencies run from $m-\frac{2^{p}}{h}$ through to $m+\frac{2^{p}-1}{h}.$ We will bound the corresponding expansion coefficients at level $p$  in terms of the coefficients from previous levels and the results are set out in the following four lemmata. We start with the coefficients that correspond to the highest frequency cosines at level $p$.
%
%
%

\begin{lemma} \label{highcoeffbound}
Let $m < 2^{\ell-1}$, for some $\ell \ge 2$ and set $h = \frac{1}{2^{\ell}}.$ Then,  for $j=0,1,\ldots,2^{p-1}-1$ we have
$$
|\overline{\alpha}_{j}^{(p)}| \le \left(\overline{\alpha}_{j}^{(p-1)}+\alpha_{j}^{(p-1)}\right)\widehat{\psi}(1)\quad {\rm{and}}\quad
 |\alpha_{2^{p-1}+j}^{(p)}| \le \left(\overline{\alpha}_{j}^{(p-1)}+\alpha_{j}^{(p-1)}\right)\widehat{\psi}(1).
$$
\end{lemma}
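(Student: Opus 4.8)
The two inequalities both come straight from the defining recursion (\ref{coeffs}), read at the passage from level $p-1$ to level $p$. Writing $S_j := \overline{\alpha}_{j}^{(p-1)} + \alpha_{j}^{(p-1)}$ for the paired sum, the first and last lines of (\ref{coeffs}) give exactly
\[
\overline{\alpha}_{j}^{(p)} = -S_j\,\widehat{\psi}\!\left(\frac{hm+j}{2^{p-1}}-2\right),\qquad
\alpha_{2^{p-1}+j}^{(p)} = -S_j\,\widehat{\psi}\!\left(\frac{hm+j}{2^{p-1}}+1\right),
\]
for $j=0,1,\ldots,2^{p-1}-1$. Since $\widehat{\psi}\ge 0$, taking moduli and applying the monotonicity estimates (\ref{extreme1}) and (\ref{extreme2}) (read with $p$ replaced by $p-1$, so that each Gaussian multiplier is bounded above by $\widehat{\psi}(1)$) immediately yields $|\overline{\alpha}_{j}^{(p)}| \le |S_j|\,\widehat{\psi}(1)$ and $|\alpha_{2^{p-1}+j}^{(p)}| \le |S_j|\,\widehat{\psi}(1)$. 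This is the whole content of the estimate apart from one point: the statement as written carries the \emph{signed} sum $S_j$ on the right-hand side, not its modulus.

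Thus the genuine work is to show that the paired sums are nonnegative, $S_j = \overline{\alpha}_{j}^{(p-1)} + \alpha_{j}^{(p-1)} \ge 0$, over the relevant index range; granting this, $|S_j| = S_j$ and the stated form follows verbatim. I would isolate this positivity as an auxiliary claim and attempt it by induction on the level, using the sum-recursions obtained by adding pairs of lines in (\ref{coeffs}): adding the first and third lines gives, for the low half, $S_{j}^{(p)} = \alpha_{j}^{(p-1)} - S_{j}^{(p-1)}\bigl[\widehat{\psi}(u_j-2)+\widehat{\psi}(u_j)\bigr]$ with $u_j = (hm+j)/2^{p-1}$, and adding the second and fourth lines gives the companion relation $S_{2^{p-1}+j}^{(p)} = \overline{\alpha}_{j}^{(p-1)} - S_{j}^{(p-1)}\bigl[\widehat{\psi}(u_j-1)+\widehat{\psi}(u_j+1)\bigr]$ for the upper half.

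For the central index this is transparent: $\overline{\alpha}_{0}^{(p)}$ carries a Gaussian evaluated near $2$ and is therefore negligible, while $\alpha_{0}^{(p)}$ behaves like the telescoping product $\prod(1-\widehat{\psi}(hm/2^{i}))$ of factors in $(0,1]$, so $S_{0}^{(p)}>0$ with room to spare. The main obstacle is the sign control for the off-centre indices $j\ge 1$: there both $\overline{\alpha}_{j}^{(p-1)}$ and $\alpha_{j}^{(p-1)}$ are themselves tiny signed quantities of size $O(\widehat{\psi}(1))$, with no dominant positive term to lean on, so establishing $S_j\ge 0$ would require the quantitative Gaussian decay $\widehat{\psi}(t)=e^{-2\pi^2 t^2}$ — comparing arguments near $1$ against arguments in $[\tfrac34,1]$ — together with the standing range $0<hm<\tfrac12$, to argue that the positive contributions in the $S$-recursion outweigh the negative ones. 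I expect this bookkeeping to be where the difficulty concentrates and the estimate to be genuinely tight. Should the signed sum fail to stay nonnegative for some off-centre index, the honest reading of the lemma is with $|S_j|$ in place of $S_j$, which the first paragraph establishes unconditionally and which suffices for the coefficient estimates used downstream.
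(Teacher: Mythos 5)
Your first paragraph is precisely the paper's proof: the authors likewise read the recursion (\ref{coeffs}) at the passage from level $p-1$ to level $p$ and bound the two Gaussian multipliers $\widehat{\psi}\bigl(\frac{hm+j}{2^{p-1}}-2\bigr)$ and $\widehat{\psi}\bigl(\frac{hm+j}{2^{p-1}}+1\bigr)$ by $\widehat{\psi}(1)$ via (\ref{extreme1}) and (\ref{extreme2}); nothing more is said in the paper. The sign question that occupies the remainder of your proposal is a typo in the statement, not hidden mathematical content: the right-hand side should carry $\bigl(|\overline{\alpha}_{j}^{(p-1)}|+|\alpha_{j}^{(p-1)}|\bigr)\widehat{\psi}(1)$, and this is how the paper itself invokes the lemma downstream --- in the proofs of Lemmas~\ref{midcoeffbounda}, \ref{midcoeffboundb} and \ref{lowcoeffbound} it is always cited in the form $(|\overline{\alpha}_{j}|+|\alpha_{j}|)\widehat{\psi}(1)$, which follows from your unconditional bound $|S_j|\,\widehat{\psi}(1)$ by the triangle inequality. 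So the auxiliary positivity claim $S_j \ge 0$ is never needed anywhere in the argument, and pursuing it by induction would be wasted effort (and, as you rightly suspect, delicate for off-centre $j$, where both coefficients are tiny signed quantities with no dominant positive term). Your closing fallback --- read the lemma with $|S_j|$, which your first paragraph establishes and which suffices for all downstream coefficient estimates --- is exactly the correct resolution and matches the paper's actual usage.
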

\begin{proof} The above inequalities come directly from the expressions from the previous proposition  together with the bounds (\ref{extreme1}) and (\ref{extreme2}).
\end{proof}

The next two lemmata  deal with coefficients of the truncation that are associated with  mid-range frequencies of the cosines.

\begin{lemma} \label{midcoeffbounda}
Let $m < 2^{\ell-1}$, for some $\ell \ge 2$. Then, for $p \ge 3$ and for $j=0,1,\ldots,2^{p-2}-1$ we have
\[
\begin{aligned}
| \overline{\alpha}_{2^{p-1}+j}^{(p)}  |  \le   \mu_{a} \left(|\alpha_{j}^{(p-2)}|+|\overline{\alpha}_{j}^{(p-2)}|\right)\,\,\,\,{\rm{and}}\,\,\,\, | \alpha_{2^{p-2}+j}^{(p)} | & \le  \mu_{a} \left(|\alpha_{j}^{(p-2)}|+|\bar \alpha_{j}^{(p-2)}|\right)
\end{aligned}
\]
where $\mu_{a}=\left(\widehat{\psi}\left(\frac{1}{2}\right)+\widehat{\psi}(1) \right).$
\end{lemma}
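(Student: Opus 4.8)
The plan is to unfold the recursion (\ref{coeffs}) twice: once from level $p-1$ to level $p$, and then from level $p-2$ to level $p-1$, so that each target coefficient is expressed purely through the level $p-2$ data $\overline{\alpha}_{j}^{(p-2)},\alpha_{j}^{(p-2)}$. Since $p\ge 3$ forces $j\le 2^{p-2}-1<2^{p-1}$, the index $2^{p-1}+j$ is governed by the second line of (\ref{coeffs}) read at level $p-1$, and the index $2^{p-2}+j$ by the third line, so that
\[
\overline{\alpha}_{2^{p-1}+j}^{(p)}=\overline{\alpha}_{j}^{(p-1)}-\left(\overline{\alpha}_{j}^{(p-1)}+\alpha_{j}^{(p-1)}\right)\widehat{\psi}\left(\tfrac{hm+j}{2^{p-1}}-1\right),
\]
\[
\alpha_{2^{p-2}+j}^{(p)}=\alpha_{2^{p-2}+j}^{(p-1)}-\left(\overline{\alpha}_{2^{p-2}+j}^{(p-1)}+\alpha_{2^{p-2}+j}^{(p-1)}\right)\widehat{\psi}\left(\tfrac{hm+j}{2^{p-1}}+\tfrac12\right).
\]
In each identity I would isolate a single \emph{carry} coefficient and a \emph{combination} term of the form $\overline{\alpha}_{\cdot}^{(p-1)}+\alpha_{\cdot}^{(p-1)}$ multiplied by one value of $\widehat{\psi}$, and then apply (\ref{coeffs}) once more at level $p-2$ to the level $p-1$ quantities that appear.

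For the $\widehat{\psi}$-multipliers I would use only that $\widehat{\psi}$ is even and decreasing on $[0,\infty)$, exactly as in (\ref{extreme1})--(\ref{extreme2}). Writing $t=(hm+j)/2^{p-2}\in(0,1)$ for $j=0,\dots,2^{p-2}-1$ (using $0<hm<\tfrac12$), the outer multipliers above have arguments $\tfrac{t}{2}-1$ and $\tfrac{t}{2}+\tfrac12$, both of modulus in $(\tfrac12,1)$, hence bounded by $\widehat{\psi}(\tfrac12)$; while expanding the carry terms by (\ref{coeffs}) at level $p-2$ gives
\[
\overline{\alpha}_{j}^{(p-1)}=-\left(\overline{\alpha}_{j}^{(p-2)}+\alpha_{j}^{(p-2)}\right)\widehat{\psi}(t-2),\qquad \alpha_{2^{p-2}+j}^{(p-1)}=-\left(\overline{\alpha}_{j}^{(p-2)}+\alpha_{j}^{(p-2)}\right)\widehat{\psi}(t+1),
\]
whose multipliers have arguments of modulus in $(1,2)$ and are thus bounded by $\widehat{\psi}(1)$ (this is precisely the estimate of Lemma \ref{highcoeffbound}). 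Adding the two contributions produces exactly $\widehat{\psi}(\tfrac12)+\widehat{\psi}(1)=\mu_{a}$.

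The one remaining ingredient, and the step I expect to be the main obstacle, is to show that the combination terms do not grow across a level, namely
\[
\left|\overline{\alpha}_{j}^{(p-1)}+\alpha_{j}^{(p-1)}\right|\le\left|\overline{\alpha}_{j}^{(p-2)}\right|+\left|\alpha_{j}^{(p-2)}\right|,
\]
together with the mirror statement for the index $2^{p-2}+j$. Expanding by (\ref{coeffs}) gives $\overline{\alpha}_{j}^{(p-1)}+\alpha_{j}^{(p-1)}=\alpha_{j}^{(p-2)}-(\overline{\alpha}_{j}^{(p-2)}+\alpha_{j}^{(p-2)})B$ with $B=\widehat{\psi}(t)+\widehat{\psi}(2-t)$, so with $a=\alpha_{j}^{(p-2)}$, $\bar a=\overline{\alpha}_{j}^{(p-2)}$ the claim reduces to the elementary inequality $|a(1-B)-\bar a B|\le|a|+|\bar a|$, valid whenever $0\le B\le 1$; the index $2^{p-2}+j$ leads to the symmetric statement with $a,\bar a$ interchanged and $B'=\widehat{\psi}(1-t)+\widehat{\psi}(1+t)$. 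The delicate point is that $B$ exceeds $1$ slightly as $t\downarrow 0$ (and $B'$ as $t\uparrow 1$), the excess being at most $\widehat{\psi}(2)$. I would therefore either restrict the clean estimate to the regime $B\le 1$ and absorb the residual $O(\widehat{\psi}(2))$ contributions into the tolerance $\epsilon=2\widehat{\psi}(2)$, in line with the paper's stated policy of aggregating factors of size $\widehat{\psi}(2)$, or sharpen the estimate to $|\bar a|+\widehat{\psi}(2)(|a|+|\bar a|)$ and carry the harmless correction forward.

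Finally, combining the three ingredients and applying the triangle inequality to the two identities of the first paragraph yields
\[
\left|\overline{\alpha}_{2^{p-1}+j}^{(p)}\right|\le\widehat{\psi}(1)\left(|\overline{\alpha}_{j}^{(p-2)}|+|\alpha_{j}^{(p-2)}|\right)+\widehat{\psi}(\tfrac12)\left(|\overline{\alpha}_{j}^{(p-2)}|+|\alpha_{j}^{(p-2)}|\right)=\mu_{a}\left(|\overline{\alpha}_{j}^{(p-2)}|+|\alpha_{j}^{(p-2)}|\right),
\]
and identically for $|\alpha_{2^{p-2}+j}^{(p)}|$, which are the two asserted bounds. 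The hypothesis $p\ge 3$ ensures $2^{p-2}\ge 2$, so every index invoked genuinely lies in the range for which (\ref{coeffs}) was derived, and the mirror symmetry $a\leftrightarrow\bar a$, $B\leftrightarrow B'$ means the two inequalities are proved by one and the same argument.
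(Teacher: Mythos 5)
Your unfolding of (\ref{coeffs}), your two level-$p$ identities, and your bounds on the $\widehat{\psi}$-multipliers (modulus of the outer argument in $(\tfrac12,1)$, of the carry argument in $(1,2)$) are all correct, and in outline this is the paper's own two-step argument. However, the step you yourself flag as the main obstacle is a genuine gap, and it is self-inflicted by your grouping: the inequality $|\overline{\alpha}_{j}^{(p-1)}+\alpha_{j}^{(p-1)}|\le|\overline{\alpha}_{j}^{(p-2)}|+|\alpha_{j}^{(p-2)}|$ is, as you observe, false in general because $B=\widehat{\psi}(t)+\widehat{\psi}(2-t)$ can exceed $1$, so what your argument actually establishes is the lemma with $\mu_a$ replaced by $\mu_a+O(\widehat{\psi}(2))$. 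Neither of your proposed repairs restores the statement as claimed: absorbing the excess into $\epsilon$ does not fit the paper's bookkeeping, since the $\epsilon$-budget tracks the remainder functions $g_p$ of Proposition \ref{propiter}, whereas this lemma is a statement about the exact coefficients defined by the recursion (\ref{coeffs}), which feed into the three-term recursion of Theorem \ref{trate}; carrying the correction forward is numerically harmless (there is ample slack in $\mu_a<0.0072$) but proves a strictly weaker inequality than the one asserted.

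The paper sidesteps the obstacle entirely by never forming the combination $\overline{\alpha}+\alpha$ against a composite multiplier. From the single-$\widehat{\psi}$ identity
$\alpha_{2^{p-2}+j}^{(p)}=\alpha_{2^{p-2}+j}^{(p-1)}\left(1-\widehat{\psi}\left(\tfrac12+\tfrac{hm+j}{2^{p-1}}\right)\right)-\overline{\alpha}_{2^{p-2}+j}^{(p-1)}\widehat{\psi}\left(\tfrac12+\tfrac{hm+j}{2^{p-1}}\right)$
it takes absolute values already at level $p-1$, using $0\le 1-\widehat{\psi}(\cdot)\le 1$ and $\widehat{\psi}(\cdot)\le\widehat{\psi}(\tfrac12)$ on the two multipliers separately, obtaining $|\alpha_{2^{p-2}+j}^{(p)}|\le|\alpha_{2^{p-2}+j}^{(p-1)}|+\widehat{\psi}(\tfrac12)\,|\overline{\alpha}_{2^{p-2}+j}^{(p-1)}|$; then Lemma \ref{highcoeffbound} at level $p-1$ bounds the first term by $\widehat{\psi}(1)\left(|\overline{\alpha}_{j}^{(p-2)}|+|\alpha_{j}^{(p-2)}|\right)$ and the crude bound (\ref{crude}) handles the second, giving exactly $\mu_a$; the $\overline{\alpha}_{2^{p-1}+j}^{(p)}$ case is symmetric. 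In your grouping the carry coefficient is counted twice (once alone, once inside the combination) and the combination's multiplier is a sum of two $\widehat{\psi}$-values that can slip above $1$; even replacing your $B$-analysis by a plain triangle inequality on the combination still yields $\widehat{\psi}(1)+\widehat{\psi}(\tfrac12)\left(1+\widehat{\psi}(1)\right)>\mu_a$. Regroup as the paper does and the delicate $B$, $B'$ analysis and the residual $O(\widehat{\psi}(2))$ terms disappear altogether.
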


\begin{proof} Using (\ref{coeffs}) we have
\[
\alpha_{2^{p-2}+j}^{(p)}  =   \alpha_{2^{p-2}+j}^{(p-1)}
\left( 1 - \widehat{\psi}\left(\frac{1}{2}+ \frac{hm+j}{2^{p-1}}\right) \right) -
\overline{\alpha}_{2^{p-2}+j}^{(p-1)} \widehat{\psi}\left(\frac{1}{2}+ \frac{hm+j}{2^{p-1}}\right)
\]
and thus
\EQ{midstart}{
\begin{aligned}
|\alpha_{2^{p-2}+j}^{(p)}|  &\le  | \alpha_{2^{p-2}+j}^{(p-1)}| +
|\overline{\alpha}_{2^{p-2}+j}^{(p-1)}| \widehat{\psi}\left (\frac{1}{2}\right )\\
&\le \left(|\overline{\alpha}_{j}^{(p-2)}|+|\alpha_{j}^{(p-2)}|\right)\widehat{\psi}(1)+|\overline{\alpha}_{2^{p-2}+j}^{(p-1)}| \widehat{\psi}\left (\frac{1}{2}\right ),
\end{aligned}
}
where the first bound follows since $\widehat{\psi}$ is decreasing and the second bound follows an application of Lemma \ref{highcoeffbound} with $p$ replaced by $p-1.$ Now appealing to (\ref{crude})  we have the following crude bound $
|\overline{\alpha}_{2^{p-2}+j}^{(p-1)}|\le
|\overline{\alpha}_{j}^{(p-2)}|+ |\alpha_{j}^{(p-2)}|.$
When substituted into (\ref{midstart}) this provides the desired result.
The proof of the bound for $|\overline{\alpha}_{2^{p-1}+j}^{(p)}|$ follows in the same fashion.
\end{proof}

\begin{lemma} \label{midcoeffboundb}
Let $m < 2^{\ell-1}$, for some $\ell \ge 2$. Then, for $p \ge 3$ and for $j=0,1,\ldots,2^{p-3}-1$ we have
\[
\begin{aligned}
| \overline{\alpha}_{2^{p-1}+2^{p-2}+j}^{(p)}  |  \le \mu_{b} \left(|\alpha_{j}^{(p-3)}|+|\overline{\alpha}_{j}^{(p-3)}|\right) \,\,\,\, {\rm{and}}\,\,\,\,
| \alpha_{2^{p-3}+j}^{(p)}  |  \le \mu_{b} \left(|\alpha_{j}^{(p-3)}|+|\overline{\alpha}_{j}^{(p-3)}|\right),
\end{aligned}
\]
where $\mu_{b}=\mu_{a}+\widehat{\psi}\left(\frac{1}{4}\right)\left(\widehat{\psi}(\sqrt{2})+\widehat{\psi}(1)\right).$
%
%

\end{lemma}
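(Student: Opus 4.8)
The plan is to regard Lemma~\ref{midcoeffboundb} as a \emph{two-level-deep} analogue of Lemma~\ref{midcoeffbounda}: each of the two bounds is produced by peeling off a single application of the recursion (\ref{coeffs}), recognising the resulting leading term as an instance of Lemma~\ref{midcoeffbounda} one level down, and then controlling the surviving correction term by descending one further level with Lemma~\ref{highcoeffbound} and the crude bound (\ref{crude}). I would carry out the estimate for $|\alpha_{2^{p-3}+j}^{(p)}|$ in full; the estimate for $|\overline{\alpha}_{2^{p-1}+2^{p-2}+j}^{(p)}|$ follows by the same scheme with the roles of the $\alpha$- and $\overline{\alpha}$-coefficients interchanged.

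First I would apply the third line of (\ref{coeffs}) (with $p$ replaced by $p-1$) to write $\alpha_{2^{p-3}+j}^{(p)} = \alpha_{2^{p-3}+j}^{(p-1)}(1-\widehat{\psi}(\beta)) - \overline{\alpha}_{2^{p-3}+j}^{(p-1)}\widehat{\psi}(\beta)$, where $\beta = \tfrac14 + \tfrac{hm+j}{2^{p-1}}$. Because $0<hm<\tfrac12$ and $0 \le j \le 2^{p-3}-1$, the argument satisfies $\beta \ge \tfrac14$, so monotonicity of $\widehat{\psi}$ gives $\widehat{\psi}(\beta) \le \widehat{\psi}(\tfrac14)$ and hence $|\alpha_{2^{p-3}+j}^{(p)}| \le |\alpha_{2^{p-3}+j}^{(p-1)}| + \widehat{\psi}(\tfrac14)\,|\overline{\alpha}_{2^{p-3}+j}^{(p-1)}|$. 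The leading term here is exactly $|\alpha_{2^{(p-1)-2}+j}^{(p-1)}|$, so Lemma~\ref{midcoeffbounda} applied at level $p-1$ bounds it by $\mu_a(|\alpha_j^{(p-3)}|+|\overline{\alpha}_j^{(p-3)}|)$; this supplies the $\mu_a$ summand of $\mu_b$.

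The crux is the correction term $|\overline{\alpha}_{2^{p-3}+j}^{(p-1)}|$. Since $2^{p-3}+j \le 2^{p-2}-1 < 2^{(p-1)-1}$, this index lies in the lower half at level $p-1$, so Lemma~\ref{highcoeffbound} gives $|\overline{\alpha}_{2^{p-3}+j}^{(p-1)}| \le \widehat{\psi}(1)(|\overline{\alpha}_{2^{p-3}+j}^{(p-2)}|+|\alpha_{2^{p-3}+j}^{(p-2)}|)$. The two level-$(p-2)$ coefficients are then controlled separately: the high-index term obeys $|\alpha_{2^{(p-2)-1}+j}^{(p-2)}| \le \widehat{\psi}(1)(|\overline{\alpha}_j^{(p-3)}|+|\alpha_j^{(p-3)}|)$ by a second use of Lemma~\ref{highcoeffbound}, while the low-index term obeys $|\overline{\alpha}_{2^{p-3}+j}^{(p-2)}| \le |\overline{\alpha}_j^{(p-3)}|+|\alpha_j^{(p-3)}|$ by the crude bound (\ref{crude}). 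Adding these and multiplying by the outer $\widehat{\psi}(1)$ produces the combined factor $\widehat{\psi}(1)(1+\widehat{\psi}(1))$ in front of $|\alpha_j^{(p-3)}|+|\overline{\alpha}_j^{(p-3)}|$.

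The final simplification uses the explicit Gaussian form $\widehat{\psi}(t)=\exp(-2\pi^2 t^2)$, which gives the identity $\widehat{\psi}(1)^2=\widehat{\psi}(\sqrt2)$ and hence $\widehat{\psi}(1)(1+\widehat{\psi}(1)) = \widehat{\psi}(1)+\widehat{\psi}(\sqrt2)$. Weighting this by $\widehat{\psi}(\tfrac14)$ and adding the $\mu_a$ contribution from the leading term yields exactly $\mu_b = \mu_a + \widehat{\psi}(\tfrac14)(\widehat{\psi}(\sqrt2)+\widehat{\psi}(1))$, as claimed; the bound for $|\overline{\alpha}_{2^{p-1}+2^{p-2}+j}^{(p)}|$ comes out identically after the role swap. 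I expect the only real difficulty to be bookkeeping: matching each index shift ($2^{p-3}+j$, $2^{p-2}+j$, $2^{p-1}+2^{p-2}+j$) to the correct branch of (\ref{coeffs}) at each of the two descents, and verifying that the claimed argument bounds ($\beta \ge \tfrac14$ at the first step and the modulus $\ge 1$ implicit in Lemma~\ref{highcoeffbound} at the second) hold uniformly over $0 \le j \le 2^{p-3}-1$ --- all of which rest on the standing hypothesis $hm<\tfrac12$.
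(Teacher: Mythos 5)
Your proposal is correct and follows essentially the same route as the paper's own proof: the identical peel-off of one step of (\ref{coeffs}) with $\widehat{\psi}\left(\tfrac{1}{4}+\tfrac{hm+j}{2^{p-1}}\right)\le\widehat{\psi}\left(\tfrac{1}{4}\right)$, the same use of Lemma~\ref{midcoeffbounda} at level $p-1$ for the leading term, the same two-stage control of $|\overline{\alpha}_{2^{p-3}+j}^{(p-1)}|$ via Lemma~\ref{highcoeffbound} (at levels $p-1$ and $p-2$) plus the crude bound (\ref{crude}), and the same identity $\widehat{\psi}(1)^2=\widehat{\psi}(\sqrt{2})$ to assemble $\mu_b$. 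The only caveat you share with the paper is that invoking Lemma~\ref{midcoeffbounda} at level $p-1$ strictly requires $p\ge 4$ (the $p=3$ case needs a convention for level-$0$ coefficients), but this is an artifact of the paper's own statement, not a defect of your argument.
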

\begin{proof}

We begin with the bound for $\alpha_{2^{p-3}+j}^{(p)}$ where, from (\ref{coeffs}) we have
\[
\begin{aligned}
\alpha_{2^{p-3}+j}^{(p)}&=\alpha_{2^{p-3}+j}^{(p-1)}-\left(\alpha_{2^{p-3}+j}^{(p-1)}
+\overline{\alpha}_{2^{p-3}+j}^{(p-1)}\right)\widehat{\psi}\left(\frac{hm+j+2^{p-3}}{2^{p-1}}\right)\\
&=\left(1-\widehat{\psi}\left(\frac{1}{4}+\frac{hm+j}{2^{p-1}}\right)\right)\alpha_{2^{p-3}+j}^{(p-1)}-
\overline{\alpha}_{2^{p-3}+j}^{(p-1)}\widehat{\psi}\left(\frac{1}{4}+\frac{hm+j}{2^{p-1}}\right).
\end{aligned}
\]
Using the fact that $\widehat{\psi}$ is decreasing we have  the following bound
$
|\alpha_{2^{p-3}+j}^{(p)}| \le |\alpha_{2^{p-3}+j}^{(p-1)}|+\widehat{\psi}\left(\frac{1}{4}\right)|\overline{\alpha}_{2^{p-3}+j}^{(p-1)}|.$
Using the relevant result from  Lemma \ref{midcoeffbounda}  with $p-1$ instead of $p$ we can deduce that
$
|\alpha_{2^{p-3}+j}^{(p-1)}| \le \mu_{a}\left(|\alpha_{j}^{(p-3)}|+|\overline{\alpha}_{j}^{(p-3)}|\right).$
Now for $j=0,1,\dots,2^{p-3}$ we have that
$$
|\overline{\alpha}_{2^{p-3}+j}^{(p-1)}|\le \widehat{\psi}(1)\left(|\alpha_{2^{p-3}+j}^{(p-2)}|+|\overline{\alpha}_{2^{p-3}+j}^{(p-2)}|\right).
$$

We can now appeal to Lemma \ref{highcoeffbound} (with $p-2$ instead of $p$) to give $
|\alpha_{2^{p-3}+j}^{(p-2)}|\le \widehat{\psi}(1)\left(|\alpha_{j}^{(p-3)}|+|\overline{\alpha}_{j}^{(p-3)}|\right).
$
For the second term we can consider the following crude bound
$
|\overline{\alpha}_{2^{p-3}+j}^{(p-2)}|\le |\alpha_{j}^{(p-3)}|+|\overline{\alpha}_{j}^{(p-3)}|.
$
Bringing together the previous chain of development we have
\[
\begin{aligned}
|\alpha_{2^{p-3}+j}^{(p)}| &\le \mu_{a}\left(|\alpha_{j}^{(p-3)}|+|\overline{\alpha}_{j}^{(p-3)}|\right)
+\widehat{\psi}\left(\frac{1}{4}\right)\widehat{\psi}(1)\left(\widehat{\psi}(1)+1\right)\left(|\alpha_{j}^{(p-3)}|+|\overline{\alpha}_{j}^{(p-3)}|\right)\\
&\le \left(\mu_{a}+\widehat{\psi}\left(\frac{1}{4}\right)\left(\widehat{\psi}(\sqrt{2})+\widehat{\psi}(1)\right)\right)\left( |\alpha_{j}^{(p-3)}|+|\overline{\alpha}_{j}^{(p-3)}|\right)
\end{aligned}
\]
 The other inequality follows in a similar fashion. \end{proof}

We now conclude the bounding process by considering the low-frequency expansion coefficients.
\EQ{last}{
  \overline{\alpha}_{2^{p-1}+2^{p-2}+2^{p-3}+j}^{(p)}\quad {\rm{and}}\quad \alpha_{j}^{(p)}\quad
  (j=0,1,\ldots,2^{p-3}-1).}

\begin{lemma} \label{lowcoeffbound}
Let $m < 2^{\ell-1}$, for some $\ell \ge 2$. Then, for $p \ge 3$ and for $j=0,1,\ldots,2^{p-3}-1$ we have
\beqns
| \overline{\alpha}_{2^{p-1}+2^{p-2}+2^{p-3}+j}^{(p)} |  \le   \mu_{c}\left(|\alpha_{j}^{(p-3)}|+|\overline{\alpha}_{j}^{(p-3)}|\right), \,\,\,\, {\rm{and}}\,\,\,\,
| \alpha_{j}^{(p)}|  \le  \mu_{c}\left(|\alpha_{j}^{(p-3)}|+|\overline{\alpha}_{j}^{(p-3)}|\right),
\eeqns
where $\mu_{c}=\left(1-\widehat{\psi}\left(\frac{1}{4}\right)\right)
\left(1+\widehat{\psi}(1)-\widehat{\psi}\left(\frac{1}{2}\right)\right)+\widehat{\psi}\left(\frac{3}{2}\right)
\left(1+\widehat{\psi}(1)\right)$.
\end{lemma}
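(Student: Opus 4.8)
The plan is to follow the template of Lemma \ref{midcoeffboundb}, unfolding the recursion (\ref{coeffs}) for each of the two named coefficients three levels downwards, from level $p$ to level $p-3$. At every unfolding the decisive move is to read off the argument that is fed into $\widehat{\psi}$ and to use that $\widehat{\psi}$ is even and decreasing on $[0,\infty)$, together with the two structural facts $0<hm<\tfrac12$ and $0\le j\le 2^{p-3}-1$. These pin each argument to one of the thresholds $\tfrac14,\tfrac12,1,\tfrac32$ that make up $\mu_{c}$; everything else is the crude bookkeeping already used in the earlier lemmata.

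I would treat $\alpha_{j}^{(p)}$ first. Writing (\ref{coeffs}) in the form $\alpha_{j}^{(p)}=(1-\widehat{\psi}(\tau))\alpha_{j}^{(p-1)}-\widehat{\psi}(\tau)\,\overline{\alpha}_{j}^{(p-1)}$ with $\tau=\tfrac{hm+j}{2^{p-1}}$, the constraint $j\le 2^{p-3}-1$ forces $\tau<\tfrac14$, so that $1-\widehat{\psi}(\tau)\le 1-\widehat{\psi}(\tfrac14)$ while $\widehat{\psi}(\tau)\le 1$. For the surviving term $\alpha_{j}^{(p-1)}$ I unfold once more: its argument is $\tfrac{hm+j}{2^{p-2}}<\tfrac12$, which yields the factor $1-\widehat{\psi}(\tfrac12)$, and a final step using the crude bound (\ref{crude}) and Lemma \ref{highcoeffbound} collapses the level $p-2$ pair to level $p-3$ and produces the factor $1+\widehat{\psi}(1)-\widehat{\psi}(\tfrac12)$. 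For the high-frequency term $\overline{\alpha}_{j}^{(p-1)}$ the recursion argument is $\tfrac{hm+j}{2^{p-2}}-2$, whose modulus exceeds $\tfrac32$ (again because $\tfrac{hm+j}{2^{p-2}}<\tfrac12$), giving the factor $\widehat{\psi}(\tfrac32)$, and the same collapse of $|\overline{\alpha}_{j}^{(p-2)}|+|\alpha_{j}^{(p-2)}|$ to level $p-3$ supplies the factor $1+\widehat{\psi}(1)$. Assembling the two contributions reproduces exactly
\[
|\alpha_{j}^{(p)}|\le\Big[(1-\widehat{\psi}(\tfrac14))(1+\widehat{\psi}(1)-\widehat{\psi}(\tfrac12))+\widehat{\psi}(\tfrac32)(1+\widehat{\psi}(1))\Big]\big(|\alpha_{j}^{(p-3)}|+|\overline{\alpha}_{j}^{(p-3)}|\big),
\]
the bracket being precisely $\mu_{c}$.

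For $\overline{\alpha}_{2^{p-1}+2^{p-2}+2^{p-3}+j}^{(p)}$ the calculation is the mirror image once the index is routed through the correct branch of (\ref{coeffs}). The top-level recursion gives $\overline{\alpha}_{J}^{(p)}=(1-\widehat{\psi}(\rho))\overline{\alpha}_{K}^{(p-1)}-\widehat{\psi}(\rho)\alpha_{K}^{(p-1)}$ with $K=2^{p-2}+2^{p-3}+j$ and $\rho=\tfrac{hm+j}{2^{p-1}}-\tfrac14$; the offset $2^{p-2}+2^{p-3}$ converts the arguments that subsequently appear into $\rho$ (with $|\rho|<\tfrac14$), $\sigma=\tfrac{hm+j}{2^{p-2}}-\tfrac12$ (with $|\sigma|<\tfrac12$) and $\tfrac{hm+j}{2^{p-2}}+\tfrac32>\tfrac32$, i.e. exactly the same thresholds as before. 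The roles of the two parent coefficients are simply interchanged, so the same two-step collapse via (\ref{crude}) and Lemma \ref{highcoeffbound} delivers the identical constant $\mu_{c}$.

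The only genuine obstacle is the bookkeeping: at each of the three unfoldings one must check which of the four branches of (\ref{coeffs}) is active for the current index and confirm that the resulting $\widehat{\psi}$ argument falls on the claimed side of $\tfrac14,\tfrac12,1$ or $\tfrac32$. Every one of these verifications reduces to the single inequality $\tfrac{hm+j}{2^{p-1}}<\tfrac14$ (equivalently $\tfrac{hm+j}{2^{p-2}}<\tfrac12$), which holds because $hm<\tfrac12$ and $j\le 2^{p-3}-1$; once this is secured, monotonicity of $\widehat{\psi}$ and the crude estimates already in hand close the argument.
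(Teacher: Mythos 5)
Your proposal is correct and takes essentially the same route as the paper: unfold the recursion (\ref{coeffs}) three levels, use that $\widehat{\psi}$ is even and decreasing together with $0<hm<\frac{1}{2}$ and $j\le 2^{p-3}-1$ to pin the successive arguments at the thresholds $\frac{1}{4}$, $\frac{1}{2}$ and $\frac{3}{2}$, and close with the crude bound (\ref{crude}) and Lemma \ref{highcoeffbound}, which reproduces $\mu_c$ exactly. The only differences are cosmetic — the paper pauses the collapse at level $p-2$ (its display (\ref{almost})) before the final step and dismisses the second inequality with ``follows in the same fashion,'' whereas you factor each level-$(p-1)$ term straight down to level $p-3$ and explicitly verify the index routing $J=2^{p-1}+2^{p-2}+2^{p-3}+j$ through the correct branches, a detail the paper omits.
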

\begin{proof} We consider the bound on $| \alpha_{j}^{(p)}|.$ Using (\ref{coeffs}) we have
\[
| \alpha_{j}^{(p)}|= \left(1-\widehat{\psi}\left(\frac{hm+j}{2^{p-1}}\right)\right)|\alpha_{j}^{(p-1)}|+
\widehat{\psi}\left(\frac{hm+j}{2^{p-1}}\right)|\overline{\alpha}_{j}^{(p-1)}|.
\]
Now for $j=0,1,\ldots, 2^{p-3}-1,$ we have that $1-\widehat{\psi}\left(\frac{hm+j}{2^{p-1}}\right)\le 1-\widehat{\psi}\left(\frac{1}{4}\right)$ since $ 0<hm<\frac{1}{2}$. Therefore
\[
| \alpha_{j}^{(p)}|= \left(1-\widehat{\psi}\left(\frac{1}{4}\right)\right)|\alpha_{j}^{(p-1)}|+|
\overline{\alpha}_{j}^{(p-1)}|.
\]
Using (\ref{coeffs}) we also have
\[
|\alpha_{j}^{(p-1)}|=(|\alpha_{j}^{(p-2)}|+|\overline{\alpha}_{j}^{(p-2)}|)
\widehat{\psi}\left(\frac{hm+2}{2^{p-2}}-2\right),
\]
and, for  $j=0,1,\ldots, 2^{p-3}-1,$ we have that $\widehat{\psi}\left(\frac{hm+j}{2^{p-2}}-2\right) \le \widehat{\psi}\left(\frac{3}{2}+\frac{1-hm}{2^{p-2}}\right)\le \widehat{\psi}\left(\frac{3}{2}\right)$ since  $0<hm<\frac{1}{2}$.
Thus, we have
\[
|\alpha_{j}^{(p-1)}|\le \widehat{\psi}\left(\frac{3}{2}\right)\left(|\alpha_{j}^{(p-2)}|+|\overline{\alpha}_{j}^{(p-2)}|\right).
\]
Also, using the same argument as above one can easily show that
\[
| \alpha_{j}^{(p-1)}|= \left(1-\widehat{\psi}\left(\frac{1}{2}\right)\right)|\alpha_{j}^{(p-2)}|+|
\overline{\alpha}_{j}^{(p-2)}|.
\]

This allows us to deduce that
\EQ{almost}{
\begin{aligned}
| \alpha_{j}^{(p)}|&\le
\left[ \widehat{\psi}\left(\frac{3}{2}\right)+
\left(1-\widehat{\psi}\left(\frac{1}{4}\right)\right)\left(1-\widehat{\psi}\left(\frac{1}{2}\right)\right)
\right]|\alpha_{j}^{(p-2)}|\\
&+
\left(1-\widehat{\psi}\left(\frac{1}{4}\right)+\widehat{\psi}\left(\frac{3}{2}\right)\right)|\overline{\alpha}_{j}^{(p-2)}|.
\end{aligned}
}
To finish the bound we apply the following crude estimate
\[
|\alpha_{j}^{(p-2)}|\le |\alpha_{j}^{(p-3)}|+|\overline{\alpha}_{j}^{(p-3)}|,
\]
and the following from Lemma \ref{highcoeffbound}
\[
|\overline{\alpha}_{j}^{(p-2)}|\le \widehat{\psi}(1)\left(|\alpha_{j}^{(p-3)}|+|\overline{\alpha}_{j}^{(p-3)}|\right).
\]
Employing the above estimates in (\ref{almost}) delivers the required inequality.
 The second inequality follows in the same fashion. \end{proof}

The four lemmata above taken together provide bounds on the full set of expansion coefficients that appear in the level $p$ truncation of the multilevel error. The partitioned manner in which we developed these bounds now enables us to provide estimates for the norm of the truncation itself.
\[
\begin{aligned}
 \| T_{h,p} c_m \| &= \sum_{j=0}^{2^{p}-1}|\alpha_{j}^{(p)}|+|\overline{\alpha}_{j}^{(p)}|\\
 &\le \widehat{\psi}(1)\sum_{j=0}^{2^{p-1}-1}|\alpha_{j}^{(p-1)}|+|\overline{\alpha}_{j}^{(p-1)}|\quad (=\widehat{\psi}(1)\| T_{h,p-1} c_m \|\,\,\,{\rm{see}}\,\,{\rm{Lemma}}\,\,(\ref{highcoeffbound}))\\
 &+\mu_{a}\sum_{j=0}^{2^{p-2}-1}|\alpha_{j}^{(p-2)}|+|\overline{\alpha}_{j}^{(p-2)}|\quad (=\mu_{a}\| T_{h,p-2} c_m \|\,\,\,{\rm{see}}\,\,{\rm{Lemma}}\,\,(\ref{midcoeffbounda}))\\
&+\mu_{b}\sum_{j=0}^{2^{p-3}-1}|\alpha_{j}^{(p-3)}|+|\overline{\alpha}_{j}^{(p-3)}|\quad (=\mu_{b}\| T_{h,p-3} c_m \|\,\,\,{\rm{see}}\,\,{\rm{Lemma}}\,\,(\ref{midcoeffboundb}))\\
&+\mu_{c}\sum_{j=0}^{2^{p-3}-1}|\alpha_{j}^{(p-3)}|+|\overline{\alpha}_{j}^{(p-3)}|\quad (=\mu_{c}\| T_{h,p-3} c_m \|\,\,\,{\rm{see}}\,\,{\rm{Lemma}}\,\,(\ref{lowcoeffbound})).
\end{aligned}
\]

This development culminates in the main theorem of this part of the investigation.

\begin{theorem} \label{trate}
Let $m$ be the fixed frequency of the cosine $c_{m}$ and assume $m < 2^{\ell-1}$, for some $\ell \ge 2$. Let $h = 1/2^{\ell}$ denote the spacing at level one. Assume that $p \ge 3.$ Then
\[
 \| T_{h,p} c_m \| \le \widehat{\psi}(1)\| T_{h,p-1} c_m \| + a \| T_{h,p-2} c_m \|
 +b \| T_{h,p-3} c_m \|,
 \]
 where $\widehat{\psi}(1)< 3\times 10^{-9},$ $a = \mu_{a}<0.0072$ and $b = \mu_{b}+\mu_{c}<0.711.$ Consequently, for $p\ge 3$, there exists a constant $B>0$ independent of $m$ and $\ell$ such that
 \[
 \| T_{h,p} c_m \| \le B(0.9)^{p}.
 \]

 \end{theorem}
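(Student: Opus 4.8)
The plan is to read the stated three-term recursion as the assembly of the four preceding lemmata and then feed it into a clean induction. First I would make the recursion precise. The $2^{p}$ barred and $2^{p}$ unbarred coefficients of $T_{h,p}c_m$ are partitioned into four consecutive blocks matching Lemmas \ref{highcoeffbound}--\ref{lowcoeffbound}: the top block (controlled against level $p-1$ with factor $\widehat{\psi}(1)$), two middle blocks (against levels $p-2$ and $p-3$ with factors $\mu_a$ and $\mu_b$), and the bottom block (against level $p-3$ with factor $\mu_c$). Summing the absolute values block by block and recognising each partial sum as $\widehat{\psi}(1)\|T_{h,p-1}c_m\|$, $\mu_a\|T_{h,p-2}c_m\|$, $\mu_b\|T_{h,p-3}c_m\|$ and $\mu_c\|T_{h,p-3}c_m\|$ respectively yields
\[
\|T_{h,p}c_m\| \le \widehat{\psi}(1)\|T_{h,p-1}c_m\| + a\|T_{h,p-2}c_m\| + b\|T_{h,p-3}c_m\|,
\]
with $a=\mu_a$ and $b=\mu_b+\mu_c$, which is exactly the displayed computation preceding the theorem.

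Next I would pin down the three constants by evaluating $\widehat{\psi}(t)=e^{-2\pi^2 t^2}$ at the required points. One has $\widehat{\psi}(1)=e^{-2\pi^2}<3\times10^{-9}$ and $\widehat{\psi}(\tfrac12)=e^{-\pi^2/2}<0.0072$, so $a=\mu_a=\widehat{\psi}(\tfrac12)+\widehat{\psi}(1)<0.0072$. Since $\widehat{\psi}(\tfrac14)=e^{-\pi^2/8}\approx0.29$ while $\widehat{\psi}(\sqrt2)=e^{-4\pi^2}$ and $\widehat{\psi}(\tfrac32)=e^{-9\pi^2/2}$ are negligible, the definition of $\mu_b$ gives $\mu_b\approx\mu_a$, and the definition of $\mu_c$ gives $\mu_c=(1-\widehat{\psi}(\tfrac14))(1+\widehat{\psi}(1)-\widehat{\psi}(\tfrac12))+\widehat{\psi}(\tfrac32)(1+\widehat{\psi}(1))<0.704$; hence $b=\mu_b+\mu_c<0.711$.

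With the recursion and the constants in hand, the geometric bound is an induction on $p$. Writing $t_p=\|T_{h,p}c_m\|$, I would prove $t_p\le B(0.9)^p$ by assuming it at levels $p-1,p-2,p-3$ and substituting into the recursion, which gives
\[
t_p \le B(0.9)^p\Big(\tfrac{\widehat{\psi}(1)}{0.9}+\tfrac{a}{0.9^2}+\tfrac{b}{0.9^3}\Big).
\]
The crux is that the bracketed factor is strictly below one: the dominant term $b/0.729<0.711/0.729\approx0.976$, together with $a/0.81<0.009$ and a negligible contribution from $\widehat{\psi}(1)$, totals roughly $0.985<1$, so the inductive step closes. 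This is precisely why the base $0.9$ is chosen: it slightly exceeds the dominant root of $x^3=\widehat{\psi}(1)x^2+ax+b$, which sits near $0.897$.

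For the base cases I would bound the finitely many seed levels uniformly: from $M_{h,p}c_m=T_{h,p}c_m+g_p$, Proposition \ref{itbound} gives $\|M_{h,p}c_m\|\le A^p\|c_m\|=A^p$ and $\|g_p\|\le pA^{p-1}\epsilon$, whence $\|T_{h,p}c_m\|\le A^p+pA^{p-1}\epsilon$, a bound independent of $m$ and $\ell$ (and consistent with (\ref{firstTbound}) at level one). Taking $B$ to be the maximum of $\|T_{h,p}c_m\|/(0.9)^p$ over the three seed levels then produces a constant independent of $m$ and $\ell$, as claimed. The main obstacle I anticipate is not any single estimate but securing this uniformity: because the number of coefficients in $T_{h,p}c_m$ grows like $2^{p}$, one must resist bounding the seed levels coefficient-by-coefficient and instead rely on the norm control of Proposition \ref{itbound}, which is the only ingredient manifestly free of $m$ and $\ell$.
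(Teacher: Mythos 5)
Your proposal is correct and follows essentially the same route as the paper: the recursion is obtained by exactly the same block-by-block summation over the four lemmata, and the geometric bound by the same strong induction with the same verification that $\widehat{\psi}(1)(0.9)^2 + 0.0072(0.9) + 0.711 \le (0.9)^3$. The only deviations are cosmetic — you verify the numerical constants explicitly, run the induction directly on $\|T_{h,p}c_m\|$ rather than through the paper's majorant sequence $M_p$ with seeds $M_0=M_1=M_2=1$, and seed the induction with the cruder but equally $m$- and $\ell$-uniform bound $\|T_{h,j}c_m\| \le A^j + jA^{j-1}\epsilon$ from Proposition \ref{itbound} in place of the paper's claim that $\|T_{h,j}c_m\| \le 1$ for $j=0,1,2$.
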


\begin{proof} The stated recursive bound is just a recasting of the estimate established above. To demonstrate the convergence of the truncation
 suppose that $M_p$ satisfies the relationship
$$
M_p = \widehat{\psi}(1) M_{p-1} + 0.0072M_{p-2}+0.711 M_{p-3},
$$
with $M_0=M_1=M_2=1$, and $M_{j} \le B(0.9)^j$, for $j=0,1,\cdots,p-1$. Then
\[
\begin{aligned}
M_p & \le  \widehat{\psi}(1) B(0.9)^{p-1} + 0.0072 B(0.9)^{p-2}+0.711B(0.9)^{p-3} \\
& =  B(0.9)^{p-3}\left(\widehat{\psi}(1)(0.9)^{2} +  0.0072(0.9)+0.711\right) \\
& \le  B(0.9)^p.
\end{aligned}
\]
If $\| T_{h,j} c_m \| \le M_j$, $j<p$, then $\| T_{h,p} c_m \|  \le M_p$, since
\beqns
M_p & = & \widehat{\psi}(1) M_{p-1} + 0.0072M_{p-2}+0.711 M_{p-3} \\
& \ge & \widehat{\psi}(1)\| T_{h,p-1} c_m \| + 0.0072 \| T_{h,p-2} c_m \|
 +0.711 \| T_{h,p-3} c_m \| \\
& \ge & \widehat{\psi}(1)\| T_{h,p-1} c_m \| + a \| T_{h,p-2} c_m \|
 +b \| T_{h,p-3} c_m \| \\
& \ge & \| T_{h,p} c_m \|,
\eeqns
from the established bound. Now, since (a straightforward calculation shows that) $\| T_{h,j} c_m \| \le 1$, $j=0,1,2$, $\| T_{h,p} c_k \| \le M_p \le B(0.9)^p$, $p \ge 3$. \end{proof}

\subsection{Cosine frequency $>$ initial number sample points}
Assume  that $m= 2^{\ell}+n$ where $0\le n < 2^{\ell},$ i.e., the cosine frequency $m$ is smaller than the $2^{\ell}$ points at level one but at level two the $2^{\ell+1}$  sample points will surpass $m.$ To shed some light on this situation we investigate the behaviour of the multilevel method if we start by sampling at the integers where $h=1.$ 

\begin{proposition} \label{highfreq}
Let $m=2^{\ell}+n$ for some $0 \le n < 2^{\ell}$. Then, for $1 \le p \le \ell+1$,
\beqns
M_{1,p} c_m & = & c_m - \sum_{j=0}^{p-1} M_{\frac{1}{2^{j+1}},p-1-j} Q_{\frac{1}{2^{j}}} c_{n (\md 2^j)}.
\eeqns
\end{proposition}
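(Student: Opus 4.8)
The plan is to prove the identity by induction on $p$, using two ingredients at every step: the recursive definition $M_{\frac{1}{n},p}=E_{\frac{1}{2^{p-1}}\frac{1}{n}}M_{\frac{1}{n},p-1}$ of the multilevel error, and the aliasing property of Lemma~\ref{quasitrig}(ii), which says that $Q_{\frac{1}{2^{j}}}c_m$ depends only on $m$ modulo $2^{j}$.

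For the base case $p=1$ the asserted sum reduces to its single $j=0$ term $M_{\frac{1}{2},0}Q_{1}c_{n\,(\md 1)}=Q_{1}c_{0}$, because $M_{h,0}=I$ and $n\equiv 0\,(\md 1)$. On the other hand $M_{1,1}c_m=E_{1}c_m=c_m-Q_{1}c_m$, and Lemma~\ref{quasitrig}(ii) gives $Q_{1}c_m=Q_{1}c_{m\,(\md 1)}=Q_{1}c_{0}$, so the $p=1$ case holds.

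For the inductive step I would assume the formula at a level $p$ with $1\le p\le \ell$, apply $E_{\frac{1}{2^{p}}}$, and expand by linearity:
\[
M_{1,p+1}c_m=E_{\frac{1}{2^{p}}}M_{1,p}c_m
= E_{\frac{1}{2^{p}}}c_m-\sum_{j=0}^{p-1}E_{\frac{1}{2^{p}}}\,M_{\frac{1}{2^{j+1}},\,p-1-j}\,Q_{\frac{1}{2^{j}}}c_{n\,(\md 2^{j})}.
\]
The leading term is $E_{\frac{1}{2^{p}}}c_m=c_m-Q_{\frac{1}{2^{p}}}c_m$; since $m=2^{\ell}+n$ and $p\le\ell$ we have $2^{\ell}\equiv 0\,(\md 2^{p})$, hence $m\equiv n\,(\md 2^{p})$, and Lemma~\ref{quasitrig}(ii) turns this into $c_m-Q_{\frac{1}{2^{p}}}c_{n\,(\md 2^{p})}$, which is precisely the $j=p$ summand (with $M_{\frac{1}{2^{p+1}},0}=I$) required by the level-$(p+1)$ formula.

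The remaining work is index bookkeeping. Writing the recursion for the inner operator started at spacing $\frac{1}{2^{j+1}}$ at its level $q+1$ gives $M_{\frac{1}{2^{j+1}},q+1}=E_{\frac{1}{2^{q+j+1}}}M_{\frac{1}{2^{j+1}},q}$; taking $q=p-1-j$ collapses the spacing to $\frac{1}{2^{p}}$ uniformly in $j$, so that $E_{\frac{1}{2^{p}}}M_{\frac{1}{2^{j+1}},\,p-1-j}=M_{\frac{1}{2^{j+1}},\,p-j}$. Substituting this into the sum above and absorbing the $j=p$ term from the leading part reassembles $\sum_{j=0}^{p}M_{\frac{1}{2^{j+1}},\,p-j}\,Q_{\frac{1}{2^{j}}}c_{n\,(\md 2^{j})}$, which is exactly the level-$(p+1)$ claim, closing the induction. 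The step I expect to need the most care is this spacing arithmetic together with the aliasing reduction $m\equiv n\,(\md 2^{p})$: the single operator $E_{\frac{1}{2^{p}}}$ must advance every inner operator by one level simultaneously, and the congruence $2^{\ell}\equiv 0\,(\md 2^{p})$ is precisely what forces the restriction $p\le\ell+1$, since it fails as soon as $p>\ell$.
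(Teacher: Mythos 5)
Your proof is correct and takes essentially the same route as the paper: both argue by induction on $p$, use Lemma~\ref{quasitrig}(ii) together with the congruence $m=2^{\ell}+n\equiv n\ (\md 2^{p})$ (valid precisely because $p\le\ell$) to replace $Q_{\frac{1}{2^{p}}}c_m$ by $Q_{\frac{1}{2^{p}}}c_{n(\md 2^{p})}$, and use the recursion to collapse $E_{\frac{1}{2^{p}}}M_{\frac{1}{2^{j+1}},\,p-1-j}=M_{\frac{1}{2^{j+1}},\,p-j}$ uniformly in $j$. The only difference is presentational: the paper computes $Q_{\frac{1}{2^{p}}}M_{1,p}c_m$ and subtracts, while you distribute $E_{\frac{1}{2^{p}}}=I-Q_{\frac{1}{2^{p}}}$ directly, which is the same computation.
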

\begin{proof} Since $c_m(k) = c_0(k)$, $k \in \ZZ$, $Q_1 c_m(z) = Q_1c_0(z)$. Hence
\beqns
M_{1,1} c_m = E_1 c_m & = & c_m - Q_1 c_m  =  c_m - Q_1 c_0,
\eeqns
and the case $n=1$ is established since $\ell=0(\md 1)$ and $M_{\alpha,0}$ is the identity operator.
Now assume, for $1 \le p \le \ell$, that
\[
M_{1,p} c_m  =  c_m - \sum_{j=0}^{p-1} M_{\frac{1}{2^{j+1}},p-1-j}
Q_{\frac{1}{2^{j}}} c_{n (\md 2^j)}.
\]
Using Lemma~\ref{quasitrig}~(ii) we have $Q_{\frac{1}{2^{p}}} c_m = Q_{\frac{1}{2^{p}}} c_{n (\md 2^p)}$. Hence
\[
Q_{\frac{1}{2^{p}}} M_{1,p} c_m  =  Q_{\frac{1}{2^{p}}}c_{n (\md 2^p)} - \sum_{j=0}^{p-1} Q_{\frac{1}{2^{p}}} M_{\frac{1}{2^{j+1}},p-1-j} Q_{\frac{1}{2^{j}}} c_{n (\md 2^j)},
\]
so that $M_{1,p+1} c_m  =  M_{1,p} c_m - Q_{\frac{1}{2^{p}}} M_{1,p} c_m =$
\[
\begin{aligned}
& =  c_m - M_{\frac{1}{2^{p}},0} Q_{\frac{1}{2^{p}}} c_{n (\md 2^p)} \\
&- \sum_{j=0}^{p-1}\underbrace{\left[
  M_{\frac{1}{2^{j+1}},p-1-j} Q_{\frac{1}{2^{j}}} c_{n (\md 2^j)}-
  Q_{\frac{1}{2^{p}}} M_{\frac{1}{2^{j+1}},p-1-j} Q_{\frac{1}{2^{j}}}
  c_{n (\md 2^j)}
  \right]}_{= M_{\frac{1}{2^{j+1}},p-j} Q_{\frac{1}{2^{j}}} c_{n (\md 2^j)}} \\
& =  c_m - \sum_{j=0}^{p} M_{\frac{1}{2^{j+1}},p-j} Q_{\frac{1}{2^{j}}} c_{n (\md 2^j)},
\end{aligned}
\]
and the induction is completed. \end{proof}

\begin{corollary} \label{allits}
Let $m=2^{\ell}+n$ for some $0 \le n < 2^{\ell}$. Set $h = \frac{1}{2^{\ell}},$ then, for $p \ge \ell+2$,
\[
\begin{aligned}
M_{1,p} c_m  &=  M_{\frac{h}{4},p-(\ell+2)} c_m
-M_{\frac{h}{4},p-(\ell+2)}Q_{\frac{h}{2}} c_m- \sum_{j=0}^{\ell} M_{\frac{1}{2^{j+1}},p-(j+1)}Q_{\frac{1}{2^{j}}} c_{n (\md 2^j)}.
  \end{aligned}
\]
\end{corollary}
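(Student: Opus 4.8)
The plan is to prove the identity by induction on $p$, with base case $p=\ell+2$, taking the level-$(\ell+1)$ formula of Proposition~\ref{highfreq} as the launching point. The engine behind both the base case and the inductive step is nothing more than the defining recursion for the multilevel error operator, recast conveniently: for any starting spacing $h_0$ and $q\ge 1$ one has $M_{h_0,q+1}=E_{h_0/2^{q}}M_{h_0,q}$, so prepending the single error operator of spacing $h_0/2^{q}$ advances the level index by one. Writing every spacing as a negative power of two, the point to notice is that every multilevel operator appearing in the level-$p$ formula reaches down to exactly the scale $1/2^{p-1}$; concretely $E_{1/2^{q}}M_{1/2^{j+1},q-(j+1)}=M_{1/2^{j+1},(q+1)-(j+1)}$ and $E_{1/2^{q}}M_{h/4,q-(\ell+2)}=M_{h/4,(q+1)-(\ell+2)}$, because in each case the finest scale on the left is $1/2^{q-1}$ and the prepended factor $E_{1/2^{q}}$ supplies the next scale $1/2^{q}$. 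Since the error operators are always composed on the left, in lockstep with the definition of $M$, no commutativity questions arise.

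For the base case $p=\ell+2$, I would start from
\[
M_{1,\ell+1} c_m = c_m - \sum_{j=0}^{\ell} M_{\frac{1}{2^{j+1}},\ell-j} Q_{\frac{1}{2^{j}}} c_{n (\md 2^j)},
\]
and apply the next error operator, whose spacing is $1/2^{\ell+1}=h/2$, using $M_{1,\ell+2}c_m=E_{h/2}M_{1,\ell+1}c_m$. Expanding $E_{h/2}=I-Q_{h/2}$ and invoking linearity, the leading term $c_m$ contributes $c_m-Q_{h/2}c_m$, while the recursion above promotes each summand $M_{1/2^{j+1},\ell-j}Q_{1/2^{j}}c_{n(\md 2^j)}$ to $M_{1/2^{j+1},\ell+1-j}Q_{1/2^{j}}c_{n(\md 2^j)}$. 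Because $\ell+1-j=(\ell+2)-(j+1)=p-(j+1)$ and because $M_{h/4,0}=I$ lets us rewrite $c_m-Q_{h/2}c_m$ as $M_{h/4,p-(\ell+2)}c_m-M_{h/4,p-(\ell+2)}Q_{h/2}c_m$, the result is exactly the claimed formula at $p=\ell+2$.

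For the inductive step, assuming the identity at some $p\ge\ell+2$, I would apply the level-$(p+1)$ error operator $E_{1/2^{p}}$ via $M_{1,p+1}c_m=E_{1/2^{p}}M_{1,p}c_m$. By linearity and the recursion recorded in the first paragraph, $E_{1/2^{p}}$ advances the level index of every $M$-operator on the right by one, sending $M_{h/4,p-(\ell+2)}\mapsto M_{h/4,(p+1)-(\ell+2)}$ in the first two terms and $M_{1/2^{j+1},p-(j+1)}\mapsto M_{1/2^{j+1},(p+1)-(j+1)}$ in the sum, while the factors $c_m$, $Q_{h/2}c_m$ and $Q_{1/2^{j}}c_{n(\md 2^j)}$ are left untouched. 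This reproduces the identity with $p$ replaced by $p+1$ and closes the induction.

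The only genuinely delicate point is the spacing bookkeeping — verifying that each step of the algorithm hits precisely the next dyadic scale, so that all level indices increment uniformly — together with the necessity of isolating the leading term $c_m$ at the base case. Applying $E_{h/2}$ to $c_m$ generates the term $-Q_{h/2}c_m$, which (once the later levels attach the prefactor $M_{h/4,p-(\ell+2)}$) becomes the separate second term of the corollary. It cannot be folded into the sum, since $Q_{h/2}c_m=Q_{1/2^{\ell+1}}c_m$ involves a scale finer than every $Q_{1/2^{j}}$ with $j\le\ell$ already present.
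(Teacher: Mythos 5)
Your proposal is correct and follows essentially the same route as the paper: the base case $p=\ell+2$ is obtained by applying $E_{h/2}$ to the level-$(\ell+1)$ formula of Proposition~\ref{highfreq} and rewriting $c_m - Q_{\frac{h}{2}}c_m$ via $M_{\frac{h}{4},0}=I$, after which the paper dismisses $p\ge\ell+3$ with ``follows directly from the definition of $M$'' --- precisely the level-incrementing recursion $E_{1/2^{q}}M_{1/2^{j+1},q-(j+1)}=M_{1/2^{j+1},(q+1)-(j+1)}$ you spell out. Your version merely makes the implicit induction and the dyadic-scale bookkeeping explicit, which is a faithful elaboration rather than a different argument.
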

\begin{proof} From the previous proposition we have
\beqns
M_{1,\ell+1} c_m & = & c_m - \sum_{j=0}^{\ell} M_{\frac{1}{2^{j+1}},\ell-j} Q_{\frac{1}{2^{j}}} c_{n (\md 2^j)}.
\eeqns
Thus
\[
\begin{aligned}
M_{1,\ell+2} c_m & =  \underbrace{c_m - Q_{\frac{1}{2^{\ell+1}}} c_m}_{=c_{m}-Q_{\frac{h}{2}}c_{m}}
-\sum_{j=0}^{\ell} M_{\frac{1}{2^{j+1}},\ell+1-j} Q_{\frac{1}{2^{j}}} c_{n (\md 2^j)} \\
& =  M_{\frac{h}{4},0} c_m - M_{\frac{h}{4},0}Q_{\frac{h}{2}}c_{m}
-\sum_{j=0}^{\ell} M_{\frac{1}{2^{j+1}},\ell+2-j-1} Q_{\frac{1}{2^{j}}} c_{n (\md 2^j)},
\end{aligned}
\]
giving the required result for $p=\ell+2$, since $M_{\alpha,0}$ is the identity operator, for any positive $\alpha$. The result for $p \ge \ell+3$ follows directly from the definition of $M_{\frac{1}{2^{j}} ,p}$. \end{proof}

We now return to developing further the expressions for the multi-level error terms. In what follows we pursue a slightly different approach to the one that enabled the proof of Proposition \ref{propiter}.  This time we work with a different representation of the error expansion so that we partition the higher frequency contribution (entering the expansion at the most current level) from the remaining lower level frequencies.

\begin{proposition} \label{mqbnd}
Suppose $m < 2^\ell$, for some $\ell \in \NN$. Set $h = \frac{1}{2^{\ell}}$ then  for $p \ge 3$,
\[
\begin{aligned}
&M_{\frac{h}{2},p}  Q_{h} c_m \\
 & =  \alpha_0^1 M_{\frac{h}{2},p} c_m - \alpha_1^1 M_{\frac{h}{4},p-1}c_m + \alpha_2^1 M_{\frac{h}{8},p-2} c_m - \alpha_3^1 M_{\frac{h}{8},p-3} c_m\\
 & + \alpha_0^2 M_{\frac{h}{2},p} c_{m-\frac{1}{h}} - \alpha_1^2 M_{\frac{h}{4},p-1} c_{m-\frac{1}{h}}
 + \alpha_2^2 M_{\frac{h}{8},p-2} c_{m-\frac{1}{h}} -  \alpha_3^2 M_{\frac{h}{8},p-3} c_{m-\frac{1}{h}} \\
& +\beta_1^1 M_{\frac{h}{4},p-1} c_{m+\frac{1}{h}} - \beta_2^1 M_{\frac{h}{8},p-2} c_{m+\frac{1}{h}}
 + \beta_3^1 M_{\frac{h}{8},p-3} c_{m+\frac{1}{h}} \\
& +  \beta_1^2 M_{\frac{h}{4},p-1} c_{m-\frac{2}{h}} - \beta_2^2 M_{\frac{h}{8},p-2} c_{m-\frac{2}{h}}
 + \beta_3^2 M_{\frac{h}{8},p-3} c_{m-\frac{2}{h}}\\
& + \gamma_2^1 M_{\frac{h}{8},p-2} c_{m+\frac{2}{h}} - \gamma_3^1 M_{\frac{h}{8},p-3} c_{m+\frac{2}{h}} +
\gamma_2^2 M_{\frac{h}{8},p-2} c_{m-\frac{3}{h}} - \gamma_3^2 M_{\frac{h}{8},p-3} c_{m-\frac{3}{h}} \\
& + \gamma_2^3 M_{\frac{h}{8},p-2} c_{m+\frac{3}{h}} - \gamma_3^3 M_{\frac{h}{8},p-3} c_{m+\frac{3}{h}} +
\gamma_2^4 M_{2^{-(l+3)},p-2} c_{m-\frac{4}{h}} - \gamma_3^4  M_{\frac{h}{8},p-3} c_{m-\frac{4}{h}} \\
& - \delta_3^1 M_{\frac{h}{8},p-3} c_{m+\frac{4}{h}} - \delta_3^2 M_{\frac{h}{8},p-3} c_{m-\frac{5}{h}}
 - \delta_3^3 M_{\frac{h}{8},p-3} c_{m+\frac{5}{h}} - \delta_3^4 M_{\frac{h}{8},p-3} c_{m-\frac{6}{h}} \\
& - \delta_3^5 M_{\frac{h}{8},p-3} c_{m+\frac{6}{h}} - \delta_3^6 M_{\frac{h}{8},p-3} c_{m-\frac{7}{h}} - \delta_3^7 M_{\frac{h}{8},p-3} c_{m+\frac{7}{h}} - \delta_3^8 M_{\frac{h}{8},p-3} c_{m-\frac{8}{h}} + g_p,
\end{aligned}
\]
where $|\alpha_p^i|,|\beta_p^i|\le \hpsi(1)^p$, $i=1,2$, $|\gamma_p^i|\le \hpsi(1)^p$, $i=1,\cdots,4$, $|\delta_3^i|\le \hpsi(1)^3$, $i=1,\cdots,8$, and $\| g_p\| \le 2A^p \epsilon$.
\end{proposition}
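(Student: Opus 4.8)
The plan is to prove the identity directly, by expanding $M_{\frac h2,p}Q_h c_m$ into four \emph{generations} of terms sorted by how many aliasing steps each contribution has undergone; the statement then records generations $0$ through $3$ explicitly and sweeps everything else into $g_p$. I would begin from the exact formula of Lemma~\ref{quasitrig}(i), namely $Q_h c_m=\sum_{k\in\ZZ}\widehat\psi(hm+k)\,c_{m+k/h}$ with $h=1/2^\ell$. The hypothesis $m<2^\ell$ forces $0\le hm<1$, and this pins down the fall-off of the coefficients: the terms $k=0,-1$ give $\widehat\psi(hm),\widehat\psi(hm-1)$ of modulus at most $1=\widehat\psi(1)^0$; the terms $k=1,-2$ give $\widehat\psi(1+hm),\widehat\psi(2-hm)$, both of modulus at most $\widehat\psi(1)$; and every remaining direct term has modulus at most $\widehat\psi(2)$. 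This is the generation-$0$ layer, kept intact under the full operator $M_{\frac h2,p}$ with coefficients $\alpha_0^1,\alpha_0^2$.

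The engine of the expansion is the single-step formula $E_{1/n}c_M=(1-\widehat\psi(M/n))c_M-\sum_{j\ne0}\widehat\psi(j+M/n)\,c_{M+nj}$, the aliasing relation Lemma~\ref{quasitrig}(ii), and the coarsest-level factorisations $M_{\frac h2,p}=M_{\frac h4,p-1}E_{h/2}$, $M_{\frac h4,p-1}=M_{\frac h8,p-2}E_{h/4}$, and a third such peel of $E_{h/8}$. I would peel one error operator at a time from the coarsest scale: apply $E_{h/2}$ to the part of $Q_hc_m$ not already retained, extract the newly reachable low-frequency cosines (now carried by the coarser operator $M_{\frac h4,p-1}$), and pass the remainder on to the next peel, and then once more. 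Each peel refines the scale by one dyadic factor and lowers the level by one, placing generation $g$ at the scale and level of the $g$-th family in the statement. Each off-diagonal (aliasing) step contributes a factor $\widehat\psi(|j+M/n|)$ with $|j+M/n|\ge1$ — this is exactly where $0\le hm<1$ enters, through quantities such as $\widehat\psi(\tfrac12+\tfrac{hm}{2})$, $\widehat\psi(2-hm)$, $\widehat\psi(1+hm)$ — so the coefficient retained at generation $g$ is a product of $g$ such factors and obeys $|\cdot|\le\widehat\psi(1)^g$. The number of retained frequencies doubles with each generation ($2,4,8,16$), reproducing the frequency window $c_{m-8/h},\dots,c_{m+7/h}$ and the coefficient labels $\alpha,\beta,\gamma,\delta$.

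The expansion is halted at generation $3$ for a quantitative reason: a fourth aliasing step would cost a further factor $\widehat\psi(1)$, and $\widehat\psi(1)^4=e^{-8\pi^2}=\widehat\psi(2)=\tfrac12\epsilon$. Hence all of generations $4$ and beyond, together with the direct tail $\sum_{|k|\ge2}\widehat\psi(hm+k)c_{m+k/h}$ of $Q_hc_m$, are of size $\epsilon$ and are collected into $g_p$. To bound $\|g_p\|$ I would observe that each swept piece is the image under a multilevel operator $M_{\cdot,q}$ with $q\le p$ of a function whose norm is controlled by a theta-tail estimate of exactly the type already performed in~(\ref{firstgbound}); applying $\|M_{\cdot,q}f\|\le A^q\|f\|$ from Proposition~\ref{itbound} and summing the finitely many contributions yields $\|g_p\|\le2A^p\epsilon$.

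The main obstacle is the bookkeeping of the middle step rather than any single hard estimate. One must check, frequency by frequency, that (i) each cosine is assigned to the correct generation, scale and level; (ii) the total coefficient of each retained $M_{\cdot}c_F$ — a sum over all aliasing paths reaching $F$ at that generation — is dominated by its leading product and stays within $\widehat\psi(1)^g$, using the strict inequalities $\widehat\psi(2-hm)<\widehat\psi(1)$ and $\widehat\psi(\tfrac{hm}{2})<1$ to leave slack for the geometrically smaller sub-dominant paths; and (iii) the material diverted into $g_p$ is genuinely of order $\epsilon$, which rests on the identity $\widehat\psi(1)^4=\widehat\psi(2)$ fixing generation $3$ as the place to stop. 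An equivalent route is to verify the case $p=3$ by the explicit three-fold expansion above (where the generation-$3$ operator reduces to the identity) and then induct on $p$, applying the finest-scale recursion $M_{\cdot,q+1}=E_{h/2^{p+1}}M_{\cdot,q}$ to advance every term's level by one while the $p$-independent coefficients $\alpha,\beta,\gamma,\delta$ are left unchanged and $g_{p+1}=E_{h/2^{p+1}}g_p$, so that the bound $\|g_{p+1}\|\le A\|g_p\|\le 2A^{p+1}\epsilon$ propagates through Proposition~\ref{itbound}.
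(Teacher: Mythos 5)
Your proposal is correct and takes essentially the same route as the paper's proof: an explicit expansion of the first three levels in which each aliasing step contributes a factor bounded by $\hpsi(1)$ (using $0\le hm<1$), truncation after the third generation because $\hpsi(1)^4=\hpsi(2)=\tfrac12\epsilon$, absorption of all swept terms into $g_p$ with $\|g_p\|\le 2A^p\epsilon$ via Proposition~\ref{itbound}, and the observation that for $p\ge 3$ no new frequencies are retained so each further error application merely advances every term's level by one. The only presentational difference is that you peel operators coarsest-first through the factorization $M_{\frac{h}{2},p}=M_{\frac{h}{4},p-1}E_{\frac{h}{2}}$, whereas the paper iterates forward through $p=1,2,3$ and then invokes the same level-advancing induction you describe.
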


\begin{proof} We have
\[
\begin{aligned}
Q_{h} c_m  &= \left[ \widehat{\psi}(hm ) c_{m} + \widehat{\psi}(hm-1 )c_{m-\frac{1}{h}}\right]+\left[ \widehat{\psi}(hm+1 ) c_{m+\frac{1}{h}} + \widehat{\psi}(hm-2 )c_{m-\frac{2}{h}}\right]+g_{0}\\
&=\left[ \alpha_{0}^{1} c_{m} + \alpha_{0}^{2}c_{m-\frac{1}{h}}\right]+\left[ \beta_{0}^{1}c_{m+\frac{1}{h}} + \beta_{0}^{2}c_{m-\frac{2}{h}}\right]+g_{0}
\end{aligned}
\]

where $|\alpha_0^1|,|\alpha_0^2| \le 1$, $|\beta_0^1|,|\beta_0^2| \le\hpsi(1)$, and as in the proof of Proposition~\ref{highfreq}, $\| g_0 \| \le \epsilon$.

Similarly we can show
\[
\begin{aligned}
Q_{\frac{h}{2}}Q_{h} c_m& =  \alpha_{0}^{1} Q_{\frac{h}{2}}c_{m}+\alpha_{0}^{2}Q_{\frac{h}{2}}c_{m-\frac{1}{h}}\\
&+\beta_{0}^{1}\Bigl[\left(\widehat{\psi}\left(\frac{hm-3}{2}\right)c_{m-\frac{3}{h}}
+\widehat{\psi}\left(\frac{hm+3}{2}\right)c_{m+\frac{3}{h}}\right)\\
&+\left(\widehat{\psi}\left(\frac{hm-1}{2}\right)c_{m-\frac{1}{h}}
+\widehat{\psi}\left(\frac{hm+1}{2}\right)c_{m+\frac{1}{h}}\right)+g_{1}^{1}\Bigr]\\
&+\beta_{0}^{2}\Bigl[\left(\widehat{\psi}\left(\frac{hm-4}{2}\right)c_{m-\frac{4}{h}}
+\widehat{\psi}\left(\frac{hm+2}{2}\right)c_{m+\frac{2}{h}}\right)\\
&+\left(\widehat{\psi}\left(\frac{hm-2}{2}\right)c_{m-\frac{2}{h}}
+\widehat{\psi}\left(\frac{hm}{2}\right)c_{m}\right)+g_{1}^{2}\Bigr]+Q_{\frac{h}{2}}g_{0},
\end{aligned}
\]
where $\|g_{1}^{1}\|,\|g_{1}^{2}\| \le \epsilon.$ These expressions allow us to deduce that
\[
\begin{aligned}
M_{\frac{h}{2},1}Q_{h}c_{m}&=Q_{h}c_{m}-Q_{\frac{h}{2}}Q_{h}= \alpha_{0}^{1}\underbrace{\left(c_{m}-Q_{\frac{h}{2}}c_{m}\right)}_{=M_{\frac{h}{2},1}c_{m}}
+\alpha_{0}^{2}\underbrace{\left(c_{m-\frac{1}{h}}
 -Q_{\frac{h}{2}}c_{m-\frac{1}{h}}\right)}_{=M_{\frac{h}{2},1}c_{m-\frac{1}{h}}}\\
 & -\alpha_{1}^{1}c_{m}-\alpha_{1}^{2}c_{m-\frac{1}{h}}+\beta_{1}^{1}c_{m+\frac{1}{h}}+\beta_{1}^{2}c_{m-\frac{2}{h}}\\
 &
-\gamma_{1}^{1}c_{m+\frac{2}{h}}-\gamma_{1}^{2}c_{m-\frac{3}{h}}-\gamma_{1}^{3}c_{m+\frac{3}{h}}
-\gamma_{1}^{4}c_{m-\frac{4}{h}}+\widetilde{g}_{1}+\underbrace{g_{0}-Q_{\frac{h}{2}}g_{0}}_{=M_{\frac{h}{2},1}g_{0}}\\
\end{aligned}
\]
where $\widetilde{g}_{1}=-(\beta_{0}^{1}g_{1}^{1}+\beta_{0}^{2}g_{1}^{1})$ with norm $\|\widetilde{g}_{1}\|\le 2\widehat{\psi}(1)\epsilon.$ Furthermore the coefficients appearing in the above are given by
\[
\begin{aligned}
 \alpha_{1}^{1}&=\beta_{0}^{2}\widehat{\psi}\left(\frac{hm}{2}\right),\quad 
 \alpha_{1}^{2}=\beta_{0}^{1}\widehat{\psi}\left(\frac{hm-1}{2}\right),\\\beta_{1}^{1}&=\beta_{0}^{1}\left(1-\widehat{\psi}\left(\frac{hm+1}{2}\right)\right),\quad 
 \beta_{1}^{2}=\beta_{0}^{2}\left(1-\widehat{\psi}\left(\frac{hm-2}{2}\right)\right),\\
 \gamma_{1}^{1}&=\beta_{0}^{2}\widehat{\psi}\left(\frac{hm+2}{2}\right),\,\,\,
 \gamma_{1}^{2}=\beta_{0}^{1}\widehat{\psi}\left(\frac{hm-3}{2}\right),\\
  \gamma_{1}^{3}&=\beta_{0}^{1}\widehat{\psi}\left(\frac{hm+3}{2}\right),\,\,\,
 \gamma_{1}^{4}=\beta_{0}^{2}\widehat{\psi}\left(\frac{hm-4}{2}\right),
 \end{aligned}
 \]
so that $|\alpha_{1}^{1}|,|\alpha_{1}^{2}|,|\beta_{1}^{1}|,|\beta_{1}^{1}|\le \widehat{\psi}(1)$ and $|\gamma_{1}^{1}|,\ldots,|\gamma_{1}^{4}|\le (\widehat{\psi}(1))^{2}.$ In summary we conclude that multi-level error at level $p=1$ is given by
\[
\begin{aligned}
M_{\frac{h}{2},1}Q_{h}c_{m}&=\alpha_{0}^{1}M_{\frac{h}{2},1}c_{m}+\alpha_{0}^{2}M_{\frac{h}{2},1}c_{m-\frac{1}{h}}
 -\alpha_{1}^{1}c_{m}-\alpha_{1}^{2}c_{m-\frac{1}{h}}+\beta_{1}^{1}c_{m+\frac{1}{h}}+\beta_{1}^{2}c_{m-\frac{2}{h}}\\
 &
-\gamma_{1}^{1}c_{m+\frac{2}{h}}-\gamma_{1}^{2}c_{m-\frac{3}{h}}-\gamma_{1}^{3}c_{m+\frac{3}{h}}
-\gamma_{1}^{4}c_{m-\frac{4}{h}}+g_{1},
\end{aligned}
\]
where $g_{1}=\widetilde{g}_{1}+M_{\frac{h}{2},1}g_{0}$ which has norm
$
\|g_{1}\|\le 2\widehat{\psi}(1)\epsilon+A\|g_{0}\|\le (2\widehat{\psi}(1)+A)\epsilon\le 2A\epsilon.
$

If we repeat the above chain of reasoning we get the following expression for the level two ($p=2$) error.

\[
\begin{aligned}
M_{\frac{h}{2},2}Q_{h}c_{m}=&=  \alpha_0^1 M_{\frac{h}{2},2} c_m -
\alpha_1^1 M_{\frac{h}{4},1}c_m + \alpha_2^1 c_m  \\
& + \alpha_0^2 M_{\frac{h}{2},2} c_{m-\frac{1}{h}}
 - \alpha_1^2 M_{\frac{h}{4},1} c_{m-\frac{1}{h}}
  + \alpha_2^2 c_m \\
& +\beta_1^1 M_{\frac{h}{4},1} c_{m+\frac{1}{h}}
- \beta_2^1 c_{m+\frac{1}{h}} +  \beta_1^2 M_{\frac{h}{4},1} c_{m-\frac{2}{h}}
 - \beta_2^2 c_{m-\frac{2}{h}} \\
& + \gamma_2^1 c_{m+\frac{2}{h}} + \gamma_2^2 c_{m-\frac{3}{h}} +
\gamma_2^3 c_{m+\frac{3}{h}} + \gamma_2^4 c_{m-\frac{4}{h}}  \\
& - \delta_2^1 c_{m+\frac{4}{h}} + \delta_2^2 c_{m-\frac{5}{h}} + \delta_2^3 c_{m+\frac{5}{h}} +
\delta_2^4 c_{m-\frac{6}{h}}\\
&- \delta_2^5 c_{m+\frac{6}{h}} + \delta_2^6 c_{m-\frac{7}{h}} + \delta_2^7 c_{m+\frac{7}{h}} + \delta_2^8 c_{m-\frac{8}{h}} + g_2,
\end{aligned}
\]
where $|\alpha_2^1|,|\alpha_2^2|,|\beta_2^1|,|\beta_2^2|,|\gamma_2^1|,\cdots,|\gamma_2^4| \le (\hpsi(1))^2$, $|\delta_2^1|,\cdots,|\delta_2^8| \le (\hpsi(1))^3$, and $\| g_2 \| \le  A \|  g_1 \|+8(\hpsi(1))^2 \epsilon \le A(A+\hpsi(1)) \epsilon + 8(\hpsi(1))^2 \epsilon = (A(A+\hpsi(1)) + 8(\hpsi(1))^2) \epsilon\le 2A^{2}\epsilon$.

A third iteration of this argument will introduce $16$ new high frequencies $c_{m+\frac{8}{h}},\ldots c_{m+\frac{15}{h}}$
and $c_{m-\frac{9}{h}},\ldots c_{m-\frac{16}{h}}$ but these cosines are multiplied by coefficients of order $(\hpsi(1))^4 = \hpsi(2)$. This leads to the level three $(p=3)$ error given by
\[
\begin{aligned}
M_{\frac{h}{2},3}  Q_{h} c_m & =  \alpha_0^1 M_{\frac{h}{2},3} c_m - \alpha_1^1 M_{\frac{h}{4},2}c_m +
 \alpha_2^1 M_{\frac{h}{8},1} c_m-\alpha_3^1 c_m\\
 & + \alpha_0^2 M_{\frac{h}{8},3} c_{m-\frac{1}{h}} - \alpha_1^2 M_{\frac{h}{2},2} c_{m-\frac{1}{h}} + M_{\frac{h}{8},1} \alpha_2^2c_{m-\frac{1}{h}} -\alpha_3^2 c_{m-\frac{1}{h}} \\
& +\beta_1^1 M_{\frac{h}{4},2} c_{m+\frac{1}{h}} - \beta_2^1 M_{\frac{h}{8},1} c_{m+\frac{1}{h}} + \beta_3^1 c_{m+\frac{1}{h}} \\
& +  \beta_1^2 M_{\frac{h}{4},2} c_{m-\frac{2}{h}} - \beta_2^2 M_{\frac{h}{8},1} c_{m-\frac{2}{h}} + \beta_3^2 c_{m-\frac{2}{h}}\\
& + \gamma_2^1 M_{\frac{h}{8},1} c_{m+\frac{2}{h}} - \gamma_3^1 c_{m+\frac{2}{h}} + \gamma_2^2
M_{\frac{h}{8},1} c_{m-\frac{3}{h}} - \gamma_3^2 c_{m-\frac{3}{h}} \\
& + \gamma_2^3 M_{\frac{h}{8},1} c_{m+\frac{3}{h}} - \gamma_3^3 c_{m+\frac{3}{h}} + \gamma_2^4 M_{\frac{h}{8},1}
 c_{m-\frac{4}{h}} - \gamma_3^4  c_{m-\frac{4}{h}} \\
& - \delta_3^1 c_{m+\frac{4}{h}} + \delta_3^2 c_{m-\frac{5}{h}} + \delta_3^3 c_{m+\frac{5}{h}} +
\delta_3^4 c_{m-\frac{6}{h}} \\
& - \delta_3^5 c_{m+\frac{6}{h}} + \delta_3^6 c_{m-\frac{7}{h}} + \delta_3^7 c_{m+\frac{7}{h}} + \delta_3^8 c_{m-\frac{8}{h}} \\
& + g_3,
\end{aligned}
\]
where $|\alpha_3^1|,|\alpha_3^2|,|\beta_3^1|,|\beta_3^2|,|\gamma_3^1|,\cdots,|\gamma_3^4| \le (\hpsi(1))^3$, $\| g_3\| \le 16 \hpsi(2) + A \| g_2 \| +32 \hpsi(2) (\hpsi(1))^3 \le 8 \epsilon + A(A(A+\hpsi(1)) + 8(\hpsi(1))^2) \epsilon + 16(\hpsi(1))^3 \epsilon = (A(A(A+\hpsi(1)) + 8(\hpsi(1))^2+16(\hpsi(1))^3+8) \epsilon \le 2 A^3 \epsilon$.

From this point on, we introduce no new terms in our sums, so that the process of iterating through the levels for $p \ge 3$ leads to the representation given in the proposition.
\end{proof}


\begin{corollary} \label{lowcoeffrate1}
Suppose $m < 2^{\ell}$, for some $\ell \in \NN$. Let $h = \frac{1}{2^{\ell}}$ then  for $p \ge 3$,
$$
\| M_{\frac{h}{2},p}  Q_{h} c_m \| \le 3B(0.9)^p+2A^p \epsilon.
$$
\end{corollary}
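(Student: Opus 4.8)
The plan is to read the bound straight off the representation of $M_{\frac{h}{2},p}Q_{h}c_{m}$ furnished by Proposition \ref{mqbnd}, estimating the explicit weighted multilevel errors and the remainder $g_{p}$ separately. Proposition \ref{mqbnd} already supplies $\|g_{p}\|\le 2A^{p}\epsilon$, so the whole task reduces to showing that the remaining finite sum of terms of the form $(\text{coefficient})\times M_{\frac{1}{2^{\ell'}},q}c_{K}$ contributes at most $3B(0.9)^{p}$, together with corrections that can be reabsorbed into the $A^{p}\epsilon$ scale.

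First I would classify the weighted terms by the level $q$ of the multilevel error and by the size of its coefficient. Inspecting Proposition \ref{mqbnd}, the coefficients satisfy $|\alpha_{0}^{i}|\le 1$ at level $p$; $|\alpha_{1}^{i}|,|\beta_{1}^{i}|\le\hpsi(1)$ at level $p-1$; the $\alpha_{2},\beta_{2},\gamma_{2}$ families are $\le\hpsi(1)^{2}$ at level $p-2$; and the $\alpha_{3},\beta_{3},\gamma_{3},\delta_{3}$ families are $\le\hpsi(1)^{3}$ at level $p-3$. Next comes the frequency bookkeeping: for each term $M_{\frac{1}{2^{\ell'}},q}c_{K}$ I check whether $K<2^{\ell'-1}$, i.e.\ whether the frequency lies below half the number of level-one sample points for that operator. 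Using $1/h=2^{\ell}$, the hypothesis $m<2^{\ell}$, and the evenness $c_{-K}=c_{K}$, one checks that every term except the eight $\delta_{3}$ terms satisfies this strict inequality, so Theorem \ref{trate} applies to each of their truncations; the eight $\delta_{3}$ terms (frequencies $|K|\ge 2^{\ell+2}$ against the $2^{\ell+3}$ level-one points at spacing $h/8$) fall just outside its range.

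For the admissible terms I would write $M=T+g$ via Proposition \ref{propiter}, bound $\|T_{\frac{1}{2^{\ell'}},q}c_{K}\|\le B(0.9)^{q}$ by Theorem \ref{trate} (using $\|T\|\le 1$ for the low levels $q\le 2$ and enlarging $B$ so that $B(0.9)^{q}$ dominates there), and control the sub-remainders through $\|g\|\le qA^{q-1}\epsilon$. The two level-$p$ terms carry coefficients $\le 1$ and contribute $2B(0.9)^{p}$; every other admissible truncation carries at least one factor $\hpsi(1)\approx 3\times 10^{-9}$, so the geometric tail $4\hpsi(1)B(0.9)^{p-1}+8\hpsi(1)^{2}B(0.9)^{p-2}+8\hpsi(1)^{3}B(0.9)^{p-3}$ is utterly negligible, and the total truncation contribution stays below $3B(0.9)^{p}$. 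The eight $\delta_{3}$ terms I would bound crudely by Proposition \ref{itbound}, $\|M_{\frac{h}{8},p-3}c_{K}\|\le A^{p-3}$, noting that their coefficients have size $\hpsi(1)^{3}$ with $\hpsi(1)^{4}=\hpsi(2)=\epsilon/2$; together with the admissible sub-remainders and the inherited $g_{p}$, these are gathered into the $A^{p}\epsilon$-scaled part of the estimate.

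I expect the genuine obstacle to be this last aggregation of the remainder rather than the truncation count. One must verify that the eight high-frequency $\delta_{3}$ contributions, the collection of sub-remainders accumulating at the geometric rate $A$, and the inherited $g_{p}$ all fit within the single clean bound $2A^{p}\epsilon$. This is the delicate constant-tracking step: it is precisely what forces the crude Proposition \ref{itbound} estimate (instead of Theorem \ref{trate}) on the $\delta_{3}$ terms, and it is what the generous numerical constants $3$ and $2$ in the statement are designed to absorb.
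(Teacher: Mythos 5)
Your skeleton matches the paper's proof: both read the estimate off Proposition~\ref{mqbnd}, classify the terms by level with the coefficient bounds $\hpsi(1)^i$, bound the multilevel factors via Theorem~\ref{trate}, sum the resulting near-geometric series to get below $3B(0.9)^p$, and set aside $\|g_p\| \le 2A^p\epsilon$. You are in fact more scrupulous than the paper on one point: the eight $\delta_3$ frequencies $m+\frac{4}{h},\ldots,m-\frac{8}{h}$ lie in $[2^{\ell+2},2^{\ell+3}]$, i.e.\ at or above half the $2^{\ell+3}$ level-one points of $M_{\frac{h}{8},p-3}$, so Theorem~\ref{trate} as stated does not cover them; the paper silently bounds these sixteen level-$(p-3)$ terms by $\hpsi(1)^3 B(0.9)^{p-3}$ exactly as it does the admissible ones.

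However, your repair of this point breaks the proof, and the failure is quantitative, not cosmetic. You bound the $\delta_3$ block by Proposition~\ref{itbound} as $8\hpsi(1)^3 A^{p-3}$ and claim it can be gathered into the $2A^p\epsilon$ term. But $\epsilon = 2\hpsi(2) = 2\hpsi(1)^4$, so $8\hpsi(1)^3 A^{p-3} = \frac{4}{A^3\hpsi(1)}\,\epsilon A^p \approx 2\times 10^{8}\,\epsilon A^p$: you are short exactly one factor of $\hpsi(1)$, and this contribution exceeds the entire epsilon budget (already consumed by $g_p$ alone) by eight orders of magnitude. Nor can it hide in $3B(0.9)^p$, since $A^{p-3}$ grows geometrically while $(0.9)^p$ decays, so for large $p$ your estimate is dominated by this term. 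What your argument actually proves is $3B(0.9)^p + C\hpsi(1)^3 A^p$ for some absolute $C$, strictly weaker than the stated corollary. To close the gap you need a \emph{decaying} (not merely stable) bound for $\|M_{\frac{h}{8},p-3}c_K\|$ at these borderline frequencies --- e.g.\ a one-step-delayed truncation estimate in the spirit of Proposition~\ref{highfreq}, after one further halving the sample count overtakes $K$ (you cannot simply cite Corollary~\ref{highfreqbnd}, which is proved \emph{from} the present corollary) --- or else follow the paper in extending the $B(0.9)^{p-3}$ bound to them. A smaller instance of the same bookkeeping issue: the sub-remainders of the two level-$p$ terms alone add $2pA^{p-1}\epsilon$ on top of $\|g_p\|\le 2A^p\epsilon$, so even your admissible terms overshoot the stated constant $2$; the paper is equally loose there, but the $\delta_3$ treatment is the decisive gap in your proposal.
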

\begin{proof} From the previous proposition we have $|\alpha_i^j|,|\beta_i^j|,|\gamma_i^j|,|\delta_i^j| \le \hpsi(1)^i$, for all appropriate values of $i$ and $j$. We also have, from Proposition~\ref{itbound} and Theorem~\ref{trate}, $\|T_{h,p} c_m \| \le B(0.9)^p$, whenever $m<2^{\ell-1}$. Thus we can bound each term in the sum in the statement of Proposition~\ref{mqbnd} to see that
\beqns
\| M_{\frac{h}{2},p}  Q_{h} c_m \| & \le & 2B\left[
 (0.9)^p + 2 \hpsi(1) (0.9)^{p-1} + 4 \hpsi(1)^2 (0.9)^{p-2} + 8 \hpsi(1)^3 (0.9)^{p-3}\right] + \| g_p \| \\
& \le & 2B(0.9)^p \left(1+
\left(\frac{20\widehat{\psi}(1)}{9}\right)+\left(\frac{20\widehat{\psi}(1)}{9}\right)^2+\left(\frac{20\widehat{\psi}(1)}{9}\right)^3\right) + 2A^p \epsilon \\
& \le & 3B(0.9)^p + 2A^p \epsilon.
\eeqns
\end{proof}

We can combine this result with Proposition~\ref{highfreq} to get
\begin{corollary} \label{highfreqbnd}
Let $\ell \in \NN$ and $m=2^\ell+n$ for some $0 \le n < 2^{\ell}$. Set $h = \frac{1}{2^{\ell}}$ then, for $p \ge \ell+5$,
$$
\| M_{1,p} c_m \| \le 31B(0.9)^{p-\ell-2}+(p+2)A^{p} \epsilon.
$$
\end{corollary}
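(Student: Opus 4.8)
The plan is to substitute the exact decomposition of Corollary~\ref{allits} into the norm bounds already at our disposal and estimate its three constituents one at a time. Abbreviating $P = p-\ell-2$, so that the hypothesis $p \ge \ell+5$ is precisely $P \ge 3$, Corollary~\ref{allits} gives
\[
M_{1,p} c_m = M_{\frac{h}{4},P} c_m - M_{\frac{h}{4},P} Q_{\frac{h}{2}} c_m - \sum_{j=0}^{\ell} M_{\frac{1}{2^{j+1}},p-(j+1)} Q_{\frac{1}{2^{j}}} c_{n (\md 2^j)}.
\]
The single arithmetic fact driving every subsequent estimate is that $m = 2^\ell+n < 2^{\ell+1}$, which places the first two terms in the exact frequency regime required by the earlier results once their spacings are identified.

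For the first term, $\tfrac{h}{4} = 2^{-(\ell+2)}$ and $m < 2^{(\ell+2)-1}$, so Proposition~\ref{propiter} splits it as $M_{\frac{h}{4},P} c_m = T_{\frac{h}{4},P} c_m + g_P$ with $\|g_P\| \le P A^{P-1}\epsilon$, and Theorem~\ref{trate} (applicable because $P \ge 3$) bounds $\|T_{\frac{h}{4},P} c_m\| \le B(0.9)^P$. The second term I would read as $M_{\frac{h''}{2},P} Q_{h''} c_m$ with $h'' = \tfrac{h}{2} = 2^{-(\ell+1)}$; since $m < 2^{\ell+1}$ and $P \ge 3$, Corollary~\ref{lowcoeffrate1} gives $3B(0.9)^P + 2A^P\epsilon$. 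Each summand, with $m' = n(\md 2^j) < 2^j$ and $h' = 2^{-j}$, becomes $M_{\frac{h'}{2},p-(j+1)} Q_{h'} c_{m'}$; as $p-(j+1) \ge p-\ell-1 \ge 4$, Corollary~\ref{lowcoeffrate1} applies and contributes $3B(0.9)^{p-j-1} + 2A^{p-j-1}\epsilon$.

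It then remains to add the pieces. The powers of $0.9$ assemble via a finite geometric series: the first two terms give $(1+3)B(0.9)^P$, while $\sum_{j=0}^{\ell} 3B(0.9)^{p-j-1} = 3B(0.9)^{p-\ell-1}\sum_{i=0}^{\ell}(0.9)^i \le 30B(0.9)^{p-\ell-1} = 27B(0.9)^P$, for a grand total of exactly $31B(0.9)^{p-\ell-2}$. For the $\epsilon$-terms I would bound the summed series $\sum_{j=0}^{\ell} 2A^{p-j-1} \le \frac{2A^p}{A-1} < 2A^p$ using $A>2$, and the two leading contributions by $P A^{P-1} + 2A^P \le (p-2)A^p + 2A^p = pA^p$, using $P \le p-2$ together with $A^{P-1},A^P \le A^p$; the total is then at most $(p+2)A^p\epsilon$, as claimed.

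The computations are otherwise mechanical, so the only genuine obstacle is confirming that all applicability hypotheses hold at once: that $m < 2^{\ell+1}$ is simultaneously the inequality needed to send the first term through Proposition~\ref{propiter}/Theorem~\ref{trate} and the second through Corollary~\ref{lowcoeffrate1}, and that $p \ge \ell+5$ is the exact threshold rendering every invoked level index at least $3$. A single boundary case warrants a remark, namely the $j=0$ summand, for which $c_{n(\md 1)} = c_0$ is constant and one invokes Corollary~\ref{lowcoeffrate1} in its degenerate form with spacing $1$.
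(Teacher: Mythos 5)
Your proposal is correct and follows essentially the same route as the paper's proof: the decomposition of Corollary~\ref{allits}, Proposition~\ref{propiter} together with Theorem~\ref{trate} (with $\ell+2$ in place of $\ell$) for the leading term, Corollary~\ref{lowcoeffrate1} for the second term and each summand, and geometric summation arriving at exactly $31B(0.9)^{p-\ell-2}$. The only divergences are harmless and arguably improvements: you apply Corollary~\ref{lowcoeffrate1} to the second term directly with $m<2^{\ell+1}$ instead of first shifting the frequency via Lemma~\ref{quasitrig}~(ii) as the paper does, and your $\epsilon$-bookkeeping ($ (p-2)A^{p}+2A^{p}+2A^{p} $) lands precisely on the stated $(p+2)A^{p}\epsilon$, whereas the paper's own final line reads $2(p-\ell)A^{p}\epsilon$, which for small $\ell$ does not even imply the constant claimed in the statement.
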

\begin{proof} From Corollary~\ref{allits} we have
\beqn
M_{1,p} c_m & =  & M_{\frac{h}{4},p-(\ell+2)} c_m - M_{\frac{h}{4},p-(\ell+2)} Q_{\frac{h}{2}} c_m
\nonumber \\
&& \hspace{1cm} - \sum_{j=0}^{\ell} M_{\frac{1}{2^{j+1}},p-j-1} Q_{\frac{1}{2^{j}}} c_{n (\md 2^j)}.
\eeqn
Since $p-(\ell+2)\ge 3,$ we can use Proposition~\ref{propiter} ($\ell+2$ in place of $\ell$) together with  Theorem~\ref{trate}, to yield
\beqn
\| M_{\frac{h}{4},p-(\ell+2)} c_m \| & \le & B(0.9)^{p-\ell-2}+(p-\ell-2)A^{p-\ell-2}\epsilon. \label{topterm}
\eeqn
As $2^{\ell} \le m < 2^{\ell+1}$, using Lemma~\ref{quasitrig}~(ii) we see that $Q_{\frac{h}{2}} c_m = Q_{\frac{h}{2}} c_{m-\frac{2}{h}}$. Because $2^{\ell+1}-m < 2^{\ell}$, from Corollary~\ref{lowcoeffrate1},
\beqn
\| M_{\frac{h}{4},p-(\ell+2)} Q_{\frac{h}{2}} c_m \| & = & \| M_{\frac{h}{4},p-(\ell+2)} Q_{\frac{h}{2}} c_{m-\frac{2}{h}} \| \nonumber \\
& \le & 3B(0.9)^{p-\ell-2} + 2A^{p-\ell-2} \epsilon. \label{nextterm}
\eeqn
Similarly, directly from Corollary~\ref{lowcoeffrate1}, for $0 \le j \le \ell$,
\beqn
\| M_{\frac{1}{2^{j+1}}} Q_{\frac{1}{2^{j}}} c_{n (\md 2^j)} \|
& \le & 3B(0.9)^{p-j-1} + 2A^{p-j-1} \epsilon. \label{restterm}
\eeqn
Substituting (\ref{topterm})--(\ref{restterm}) into (\ref{fullser})   we see that $\| M_{1,p} c_m \|  \le$
\beqns
 & \le  & B(0.9)^{p-\ell-2}+(p-\ell-2)A^{p-\ell-2}\epsilon + \sum_{j=0}^{\ell+1} (3B(0.9)^{p-j-1} + 2A^{p-j-1}\epsilon) \\
& \le & B(0.9)^{p-\ell-2}+(p-\ell-2)A^{p-\ell-2}\epsilon + 3B(0.9)^{p-\ell-2} \sum_{j=0}^{\ell+1} (0.9)^{j} +
2 A^{p-\ell-2}\epsilon \sum_{j=0}^{\ell+1}A^{j}\\
& \le & B(0.9)^{p-\ell-2} +(p-\ell-2)A^{p-\ell-2}\epsilon+ 30B(0.9)^{p-\ell-2} +
 2 A^{p-\ell-2}\left(\frac{A^{\ell+2}-1}{A-1}\right) \epsilon \\
& \le & 31 B(0.9)^{p-\ell-2} + 2(p-\ell)A^{p}\epsilon,
\eeqns
since $A-1 =1+3\widehat{\psi}(1) \ge 1$. \end{proof}

\medskip
We now prove the main theorem of the paper.

\begin{theorem}
Let $f= \sum_{k=0}^\infty \widehat{f}_{k} c_k \in W_s$. Then, if $s \ge 1$ and $1/2<t<s$,
\beqns
\| f - M_{1,p} \|_\infty \le \left ( 31B(1+ D(s))(0.9)^p + C(t) A^p 2^{-(p-2) (s-t)} +  {2 \over \sqrt{3}} p^{3/2} A^p \epsilon \right ) \| f \|_s,
\eeqns
where
$$
D(s)  =  \left ( \sum_{l=0}^{p-3} (0.9)^{-2\ell-4} 2^{-2\ell(s-1/2)} \right )^{1/2}, \quad
{\rm and} \quad C(t) = \left (\sum_{k=2^{p-2}}^\infty k^{-2t} \right )^{1/2}.
$$
\end{theorem}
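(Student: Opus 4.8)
The plan is to reduce the estimate for $f$ to the single-frequency bounds already in hand and then to sum these against the Fourier coefficients of $f$ using Cauchy--Schwarz and the decay encoded in $\|f\|_s$. Since $M_{1,p}$ is linear and $\|g\|_\infty\le\|g\|$ for $g\in\mathcal{N}$, I would begin from
\[
\|M_{1,p}f\|_\infty\le\sum_{k=0}^\infty|\hat f_k|\,\|M_{1,p}c_k\|,
\]
and split this sum at the cut-off frequency $k=2^{p-2}$, the scale at which the level-$p$ grid (spacing $2^{-(p-1)}$) ceases to resolve $c_k$.

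For $0\le k<2^{p-2}$ I would group the frequencies into dyadic bands $2^\ell\le k<2^{\ell+1}$, $\ell=0,\dots,p-3$, handling the constant mode $k=0$ separately (its error is bounded directly and supplies the isolated $+1$). On each band Corollary~\ref{highfreqbnd} applies with this value of $\ell$, giving $\|M_{1,p}c_k\|\le 31B(0.9)^{p-\ell-2}+(\text{order }p-\ell)A^p\epsilon$. The geometric part is summed by two Cauchy--Schwarz steps. Within a band,
\[
\sum_{2^\ell\le k<2^{\ell+1}}|\hat f_k|\le\Big(\sum_{2^\ell\le k<2^{\ell+1}}k^{-2s}\Big)^{1/2}F_\ell\le 2^{-\ell(s-1/2)}F_\ell,\qquad F_\ell=\Big(\sum_{2^\ell\le k<2^{\ell+1}}k^{2s}|\hat f_k|^2\Big)^{1/2};
\]
then across bands,
\[
\sum_{\ell=0}^{p-3}(0.9)^{-\ell-2}2^{-\ell(s-1/2)}F_\ell\le\Big(\sum_{\ell=0}^{p-3}(0.9)^{-2\ell-4}2^{-2\ell(s-1/2)}\Big)^{1/2}\Big(\sum_{\ell=0}^{p-3}F_\ell^2\Big)^{1/2}\le D(s)\|f\|_s .
\]
Pulling out the common factor $31B(0.9)^p$ produces the first term $31B(1+D(s))(0.9)^p\|f\|_s$; the hypothesis $s\ge1$ is exactly what keeps the geometric series defining $D(s)$ bounded as $p$ grows.

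The $\epsilon$-part of Corollary~\ref{highfreqbnd} is summed in the same manner, except that the per-frequency coefficient now grows like $(p-\ell)A^p\epsilon$. Repeating the within-band step and applying Cauchy--Schwarz over the bands with $2^{-2\ell(s-1/2)}\le1$ leaves the factor $\big(\sum_{\ell=0}^{p-3}(p-\ell)^2\big)^{1/2}\le(p^3/3)^{1/2}$, which is the origin of the term $\frac{2}{\sqrt3}p^{3/2}A^p\epsilon\|f\|_s$. For the high frequencies $k\ge2^{p-2}$ I would retain only the crude bound $\|M_{1,p}c_k\|\le A^p$ from Proposition~\ref{itbound} (here $\|c_k\|=1$), combined with the elementary inequality $k^{-s}\le k^{-t}2^{-(p-2)(s-t)}$, valid for $k\ge2^{p-2}$ when $t<s$. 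A single Cauchy--Schwarz then gives $\sum_{k\ge2^{p-2}}|\hat f_k|\le C(t)2^{-(p-2)(s-t)}\|f\|_s$, whence the middle term $C(t)A^p2^{-(p-2)(s-t)}\|f\|_s$; here $t>1/2$ makes $C(t)$ finite and $t<s$ furnishes the decaying factor. Summing the three contributions gives the stated bound.

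The delicate point I anticipate is the bookkeeping at the interface of the two regimes. Corollary~\ref{highfreqbnd} is established only for $p\ge\ell+5$, so the top few dyadic bands ($\ell$ near $p-3$) are not literally covered by it; yet these bands cannot be absorbed into the crude $A^p$ estimate, since $A\,2^{-(s-1/2)}>1$ would destroy the $(0.9)^p$ rate. Making the cut-off frequency and band range consistent with the hypothesis of Corollary~\ref{highfreqbnd} --- either by taking the threshold slightly lower (around $2^{p-5}$) and absorbing the handful of intermediate bands into the constants, or by sharpening Corollary~\ref{highfreqbnd} to hold for $p\ge\ell+3$ --- is the one genuinely non-routine ingredient; once that is settled, the remainder is the systematic Cauchy--Schwarz assembly outlined above.
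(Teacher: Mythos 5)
Your proposal follows essentially the same route as the paper's own proof: the identical three-way split into the constant mode, the dyadic bands $\ell=0,\dots,p-3$ with Corollary~\ref{highfreqbnd} applied bandwise, and the tail $k\ge 2^{p-2}$ handled by the crude $A^p$ bound of Proposition~\ref{itbound}, together with the same within-band and across-band Cauchy--Schwarz steps that produce $2^{-\ell(s-1/2)}$, $D(s)$, $C(t)2^{-(p-2)(s-t)}$ and the $\tfrac{1}{\sqrt{3}}p^{3/2}A^p\epsilon$ term via $\sum_{\ell=0}^{p-3}(p-\ell)^2\le p^3/3$. The ``delicate point'' you flag at the interface is genuine but is a gap in the paper rather than in your argument --- the paper silently applies Corollary~\ref{highfreqbnd} to the top bands $\ell=p-4,p-3$, outside its stated hypothesis $p\ge\ell+5$ --- so your proposed repair (moving the cut-off to roughly $2^{p-5}$, or extending the corollary's range) is a point in your favour.
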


\begin{proof} We first split the error into three components
\[
\begin{aligned}
&M_{1,p} (f)  =  \sum_{k=0}^\infty \widehat{f}_{k} M_{1,p} (c_k) \\
& =  \widehat{f}_{0} M_{1,p} (c_0) + \sum_{\ell=0}^{p-3} \sum_{m=0}^{2^\ell-1}\widehat{f}_{2^{\ell}+m} M_{1,p} (c_{2^{\ell}+m}) 
+ \sum_{k=2^{p-2}}^\infty  \widehat{f}_{k}M_{1,p} (c_k). 
\end{aligned}
\]
Using a similar argument to that of Proposition~\ref{highfreqbnd} we can show that
\beqn
\| M_{1,p} (c_0) \| & \le & (2B \widehat{\psi}(1) (0.9)^{p-2} +3 A^{p-1} \epsilon) \nonumber \\
& \le & 31B (0.9)^p + {1 \over \sqrt{3}} p^{3/2} A^p. \label{firstterm}
\eeqn

We bound the final term of the error expression above using Proposition~\ref{itbound} followed by an application of the Cauchy-Schwarz inequality. For any $s>t>1/2$,
\beqn
\lefteqn{\left \| \sum_{k=2^{p-2}}^\infty  \widehat{f}_{k}M_{1,p} (c_k) \right \|_\infty} \nonumber \\
& \le & \sum_{k=2^{p-2}}^\infty  \widehat{f}_{k} \|M_{1,p} (c_k) \|
\le A^p \sum_{k=2^{p-2}}^\infty  |\widehat{f}_{k}| \nonumber
\le A^p 2^{-(p-2) (s-t)} \sum_{k=2^{p-2}}^\infty  k^{s-t} |\widehat{f}_{k}| \nonumber \\
& \le & A^p 2^{-(p-2) (s-t)} \left (\sum_{k=2^{p-2}}^\infty  k^{2s} |\widehat{f}_{k})|^2 \right )^{1/2} \left (\sum_{k=2^{p-2}}^\infty k^{-2t} \right )^{1/2} \le C(t) A^p 2^{-(p-2) (s-t)} \| f \|_s. \nonumber \\
&& \label{spl3}
\eeqn
We now bound the middle term of the error expression.
\beqn
\left \| \sum_{\ell=0}^{p-3} \sum_{m=0}^{2^\ell-1}\widehat{f}_{2^{\ell}+m} M_{1,p} (c_{2^{\ell}+m}) \right \|_\infty
 & \le & \sum_{m=0}^{2^{\ell}-1} |\widehat{f}_{2^{\ell}+m}| \| M_{1,p} (c_{2^{\ell}+m}) \| \nonumber \\
& \le & \sum_{\ell=0}^{p-3} \sum_{m=0}^{2^{\ell}} |\widehat{f}_{2^{\ell}+m}| (31 B(0.9)^{p-\ell-2} + 2(p-\ell)A^{p}\epsilon) \nonumber \\
& = & \sum_{\ell=0}^{p-3} (31B(0.9)^{p-\ell-2}  + 2(p-\ell)A^{p}\epsilon) \sum_{m=0}^{2^{\ell}} |\widehat{f}_{2^{\ell}+m}|. \label{totsum}
\eeqn
Now, using the Cauchy-Schwarz inequality again we obtain
\beqns
\sum_{m=0}^{2^{\ell}-1} |\widehat{f}_{2^{\ell}+m}| & \le & 2^{-\ell s} \sum_{m=0}^{2^{\ell}} (2^{\ell}+m)^s |\widehat{f}_{2^{\ell}+m}| \le 2^{-\ell s} 2^{\ell/2} S_{\ell},
\eeqns
where $S_{\ell} = \left ( \sum_{m=0}^{2^{\ell}} (2^{\ell}+m)^{2s} |\widehat{f}_{2^{\ell}+m}|^2 \right )^{1/2}$, $\ell=0,1,\cdots,p-3$. Substituting into (\ref{totsum}) we have
\beqn
\left \| \sum_{\ell=0}^{p-3} \sum_{m=0}^{2^{\ell}-1} \widehat{f}_{2^{\ell}+m}(c_{2^{\ell}+m} - M_{1,p} c_{2^{\ell}+m}) \right \|_\infty & \le & \sum_{\ell=0}^{p-3} (31 B(0.9)^{p-\ell-2}  + 2(p-\ell)A^{p}\epsilon) 2^{-\ell(s-1/2)} S_{\ell} \nonumber \\
&& \hspace{-2cm} \le  31B(0.9)^p \sum_{\ell=0}^{p-3} (0.9)^{-\ell-2} 2^{-\ell(s-1/2)} S_{\ell} + 2\epsilon \sum_{\ell=0}^{p-3} (p-\ell)A^{p} S_{\ell}. \label{simform}
\eeqn
Using the Cauchy-Schwarz inequality a final time
\beqns
\sum_{\ell=0}^{p-3} 3(p-l) S_{\ell} & \le & 3 A^{p} \left ( \sum_{\ell=0}^{p-3} (p-\ell)^2 \right )^{1/2} \left ( \sum_{\ell=0}^{p-3}S_{\ell}^2 \right )^{1/2} \\
& \le & {1 \over \sqrt{3}} p^{3/2} A^p  \| f \|_s,
\eeqns
and
\beqns
\sum_{\ell=0}^{p-3} (0.9)^{-\ell-2} 2^{-\ell(s-1/2)} S_{\ell}
& \le & \left ( \sum_{\ell=0}^{p-3} (0.9)^{-2\ell-4} 2^{-2\ell(s-1/2)} \right )^{1/2} \left ( \sum_{\ell=0}^{p-3}S_{\ell}^2 \right )^{1/2} \\
& \le & D(s) \| f \|_s,
\eeqns
which is finite if $s \ge 1$. Putting the last two equations into (\ref{simform}), together with (\ref{spl3}) and (\ref{firstterm}) we obtain the stated result.
\end{proof}

 Since we are interested in approximation of smooth functions we can take $s$ to be as large as we please in the result above. In this case we obtain the following corollary.
 \begin{corollary}
 Let $f= \sum_{k=0}^\infty \widehat{f}_k c_k \in W_s$ for $s \ge 3$. Then,
\beqns
\| f - M_{1,p} \|_\infty \le \left ( E(p) (0.9)^p +  {1 \over \sqrt{3}} p^{3/2} A^p \epsilon \right ) \| f \|_s,
\eeqns
for some constant $E(s)>0$.
 \end{corollary}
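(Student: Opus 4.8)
The plan is to specialize the three-term estimate of the preceding theorem to the regime $s \ge 3$, showing that the first two error contributions both collapse to a geometric bound $E(s)(0.9)^p$ with a $p$-independent prefactor, while the third term is inherited directly. Throughout I fix a single admissible value of the free parameter, say $t = 1$, which is legitimate since $1/2 < 1 < s$ whenever $s \ge 3$.

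First I would establish that $D(s)$ is bounded uniformly in $p$. The quantity $D(s)^2 = \sum_{\ell=0}^{p-3}(0.9)^{-2\ell-4}\,2^{-2\ell(s-1/2)}$ is a partial sum of a geometric series with common ratio $(0.9)^{-2}\,2^{-(2s-1)}$. Because $s \ge 3$ forces $2^{2s-1} \ge 2^5 = 32 > (0.9)^{-2}$, this ratio is strictly less than $1$, so $D(s) \le D^{*}(s) := \bigl(\sum_{\ell=0}^{\infty}(0.9)^{-2\ell-4}2^{-2\ell(s-1/2)}\bigr)^{1/2} < \infty$, a constant depending only on $s$. Hence the first contribution satisfies $31B(1+D(s))(0.9)^p \le 31B(1+D^{*}(s))(0.9)^p$.

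Next I would tame the middle term $C(t)A^p\,2^{-(p-2)(s-t)}$ with $t = 1$. The factor $C(1) = \bigl(\sum_{k\ge 2^{p-2}}k^{-2}\bigr)^{1/2}$ is a tail of a convergent series and is therefore bounded for all $p \ge 3$ by $C^{*} := (\pi^2/6)^{1/2}$. Factoring $A^p 2^{-(p-2)(s-1)} = 4^{s-1}\bigl(A\,2^{-(s-1)}\bigr)^p$ and using $A = 2 + 3\widehat{\psi}(1) < 4$ together with $s-1 \ge 2$, the base obeys $A\,2^{-(s-1)} \le A/4 < 0.9$, so this term is at most $C^{*}4^{s-1}(0.9)^p$. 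This is the crux of the argument: the innocuous-looking factor $A^p$, with $A$ only marginally above $2$, is overpowered by the smoothing gain $2^{-(p-2)(s-t)}$ precisely because $s$ is taken large, which is exactly the role of the hypothesis $s \ge 3$.

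Adding these two bounds yields the combined estimate $E(s)(0.9)^p$ with $E(s) = 31B(1+D^{*}(s)) + C^{*}4^{s-1} > 0$, independent of $p$, and the remaining contribution $\tfrac{1}{\sqrt{3}}p^{3/2}A^p\epsilon$ is carried over from the theorem essentially unchanged; this gives the stated corollary. I expect the only genuinely delicate point to be the treatment of the middle term: one must choose $t$ bounded away from $1/2$ enough to keep $C(t)$ finite, yet small enough that $s-t$ renders $A\,2^{-(s-t)} < 0.9$, and then verify that $s \ge 3$ leaves room to do both at once (the choice $t=1$ does). Everything else reduces to summing the geometric and $p$-series appearing in $D(s)$ and $C(t)$.
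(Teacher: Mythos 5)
Your proposal is correct and takes essentially the same route as the paper, which likewise fixes $t=1$ and observes $A\,2^{-(s-t)} \le A/4 < 0.9$ so that the middle term of the theorem is absorbed into the geometric factor, your write-up merely making explicit the uniform-in-$p$ bounds on $D(s)$ and $C(1)$ that the paper leaves implicit. One cosmetic point: $A<4$ by itself yields only $A/4<1$, not $A/4<0.9$; cite instead $A=2+3\widehat{\psi}(1)<3$ (as the paper does), after which $A/4\le 0.75<0.9$ and the argument is complete.
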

\begin{proof} The result follow immediately from the previous theorem observing that for $t=1$, $A/2^{s-t} \le A/4 \le 0.9$ since $A<3$ and $s \ge 3$. \end{proof}

\section{Numerical experiments} \label{numerics}

In this section we look at three numerical examples. The first two are $f=c_1$ and $f=c_9$ so that we can observe the algorithm treating frequencies similarly once we have more points than the degree of the cosine. The third example will be of the smooth function $f(x)=\exp(c_1)$ to observe the scheme on a function with a full cosine expansion. We compare the convergence of the two single modes with coefficient
$$
m_p = (1-2 \hpsi(1))(1-2 \hpsi(2^{-1})) \cdots (1-\hpsi(2^{-p})),
$$
which arises naturally in the multilevel iteration applied to $c_1$; see coefficient $\alpha_0^{(1)}$ in (\ref{startcoefs}). Our hypothesis is that this sequence is more indicative of the convergence rate of the algorithm than the theoretical convergence rate of $(0.9)^p$.

\begin{table}[h]
\centering
\begin{tabular}{|c|c|c|c|c|} \hline
Level $p$  & $\| M_{1,p} c_1\|_\infty$ & $\| M_{1,p} c_9\|_\infty$ & $m_p$ & $\| M_{1,p} f \|_\infty$  \\ \hline \hline
1 & 9.9 (-1) & 2.0 & 9.9 (-1) & 1.2\\ \hline
2 & 7.0 (-1) & 1.0 & 7.0 (-1) & 1.1 (-1)  \\ \hline
3 & 1.9 (-1) & 1.3 & 1.9 (-1) & 4.4 (-1)\\ \hline
4 & 1.4 (-2) & 1.8 & 1.4 (-2) & 9.1 (-2) \\ \hline
5 & 2.4 (-4) & 1.0 & 2.6 (-3) & 8.5 (-3)\\ \hline
6 & 1.2 (-6) & 8.0 (-1) & 1.3 (-6) & 3.1 (-4) \\ \hline
7 & 2.8 (-9) & 2.6 (-1) & 1.5 (-9) & 3.4 (-6) \\ \hline
8 & 1.0 (-10) & 2.4 (-2) & 4.6 (-13) & 1.6 (-8) \\ \hline
9 & 2.9 (-12) & 5.7 (-4) & 3.4 (-17) & 7.3 (-11) \\ \hline
10 & 5.6 (-14)& 3.5 (-6) & 6.5 (-22) & 2.7 (-12) \\ \hline
\end{tabular}

\caption{Comparison of algorithm on $c_1$ and $c_9$ and the sequence $m_p$, and the multilevel approximation error for $f=\exp(c_1)$}
\label{twofreq}
\end{table}

We can see in Table~\ref{twofreq} that the decay rate for $f=c_1$ is almost identically that for $m_p$, up until $p=7$. At this stage the decay rate for $\| M_{1,p} c_1 \|_\infty$ is governed by the coefficient of other cosines than $c_1$ in the expansion for $M_{1,p} c_1$. We can see that the decay of $\| M_{1,p} c_9 \|_\infty$ lags that of $\| M_{1,p} c_1 \|_\infty$ by around 4 levels of iteration, but then the rates track consistently. This observation reflects the analysis of the previous sections, where convergence happens when the sampling rate is high enough.

In the same table we record the error for approximation of $f(x)=\exp(c_1)$. We see that the convergence is similar to that of $\| M_{1,n} c_1\|_\infty$, lagging by around 1 level, where due to the full Fourier expansion, there is a little less predictability in the early iterations.

\end{document}